\documentclass[11pt,twoside]{amsart}
\usepackage{t1enc}
\usepackage[latin2]{inputenc}

\usepackage{amsmath,amsfonts}
\usepackage{amsthm}
\usepackage{amscd}
\usepackage{amssymb}
\usepackage{enumerate}
\usepackage{mathrsfs}
\usepackage{dsfont}

\usepackage{stmaryrd}
\usepackage[T1]{fontenc}

\usepackage{mathtools}

\usepackage{bbm}

\usepackage{tikz}
\usepackage{tikz-cd}
\usepackage{wasysym}
\usepackage{verbatim}
\usepackage{fancyhdr}
\usepackage{fancybox}
\usepackage{url}
\usepackage{color}

\newtheorem{thm}{Theorem}[section]

\newtheorem{prop}[thm]{Proposition}
\newtheorem{cor}[thm]{Corollary}

\theoremstyle{definition}
\newtheorem{que}[thm]{Question}

\newtheorem{df}[thm]{Definition}
\newtheorem{exa}[thm]{Example}

\newtheorem{rem}[thm]{Remark}

\title{On measures induced by forcing names for ultrafilters}

\author{Piotr Borodulin--Nadzieja}
\address[Piotr Borodulin-Nadzieja]{Instytut Matematyczny, Uniwersytet Wroc\l awski, \ \ \  pl. Grunwaldzki 2/4, 50-384 Wroc\l aw, Poland}
\email{pborod@math.uni.wroc.pl}

\author{Katarzyna Cegie\l ka}
\address[Katarzyna Cegie\l ka]{Katedra Matematyki i Cybernetyki, Uniwersytet Ekonomiczny we Wroc\l awiu, ul. Komandorska 118/120, 53-345 Wroc\l aw, Poland}
\email{katarzyna.cegielka@ue.wroc.pl}

\thanks{The first author was supported by 
 National Science Center project no. 2018/29/B/ST1/00223. .}

\subjclass[2010]
{03E05, 03E75, 46B15, 46B45}
\keywords{}

\addtolength{\textheight}{1.5cm}
\addtolength{\textwidth}{1.5cm}

\addtolength{\oddsidemargin}{-1cm}
\addtolength{\evensidemargin}{-0.5cm}

\addtolength{\voffset}{-0.8cm}


\setcounter{tocdepth}{1}

\begin{document}

\maketitle

{\centering\footnotesize Dedicated to the memory of Kenneth Kunen (1943 -- 2020). \par}

\begin{abstract} 
	We study the interplay between properties of measures on a Boolean algebra $\mathbb{A}$ and forcing names for ultrafilters on $\mathbb{A}$. We show that several well known measure theoretic properties of Boolean algebras (such as supporting a
	strictly positive measure or carrying only separable measures) have quite natural characterizations in the forcing language.  We show some applications of this approach. In particular, we reprove a theorem of Kunen saying that in the
	classical random model there are no towers of height $\omega_2$.
\end{abstract}

In this article we try to harness measure theory to understand better models obtained by adding random reals. 


One of the main inconveniences in forcing \emph{praxis} lies in dealing with names for objects in the extension. Formally, names are quite complicated objects defined by recursion. Fortunately, in many situations, one can find a way to handle this
difficulty. E.g. for many forcing notions we may think about names for reals as Borel functions $f\colon 2^\omega \to 2^\omega$ in the ground model. In this article we exploit a way to treat names for ultrafilters. If $\mathbb{A}$ is a Boolean algebra from the ground model and
$\dot{\mathcal{U}}$ is a $\mathbb{P}$-name for an ultrafilter on $\mathbb{A}$, then we may think about $\dot{\mathcal{U}}$ as a Boolean homomorphisms $\varphi\colon \mathbb{A} \to \mathbb{P}$ (see the beginning of Section \ref{homomorphisms}). 

This approach turns
out to be quite fruitful in case when $\mathbb{P} = \mathbb{M}_\kappa$, i.e. the measure algebra of type $\kappa$  (see Section \ref{preliminaries} for the precise definitions). In the article \cite{PbnSobota} due to the first author and Damian
Sobota it was used to study sequences of ultrafilters in
the forcing extensions by measure algebras.

Here, we will rather focus on properties of single names. Every Boolean homomorphism $\varphi\colon \mathbb{A} \to \mathbb{M}_\kappa$ naturally induces a measure $\mu$ on $\mathbb{A}$, the 'pre-image' of the standard Haar measure on $\mathbb{M}_\kappa$. The question is how much information about a name for an ultrafilter we get from the measure induced by the appropriate homomorphism. 
We discuss this and related questions in Section \ref{homomorphisms}. We present several examples of names for ultrafilters on various Boolean algebras showing how the properties of the induced measures affect the properties of the ultrafilters in
the extension. We show that under $\mathsf{GCH}$ in every family of size of $\omega_2$ of homomorphisms from the Cantor algebra to $\mathbb{M}_{\omega_2}$ one can 'swap' two of them by an automorphism of $\mathbb{M}_{\omega_2}$ (Theorem \ref{Compatible}). In a sense this means
that sets of real of size $\mathfrak{c}$ in the classical
random model have to be in a sense homogeneous . Using this result we reprove the famous Kunen's theorem saying that there are no
$\omega_2$ chains in $\mathcal{P}(\omega)/Fin$ in the classical random model (Theorem \ref{towers}). Also, we show that new ultrafilters on $\mathcal{P}(\omega)/Fin \cap V$, where $V$ denotes the ground model, cannot be $\sigma$-closed (Proposition \ref{ppoint}).

Section \ref{mrp} is devoted to so called Measure Recognition Problems and its relations to random forcing. In \cite{MRP} D\v{z}amonja coined a notion of Measure Recognition Problems to describe several problems considered in measure theory.
Generally speaking, Measure Recognition Problem for a property $\varphi$ ($\mathsf{MRP}$($\varphi$)) is a question about a combinatorial characterization of Boolean algebras carrying measures having
property $\varphi$\footnote{In \cite{MRP} D\v{z}amonja assumed additionally that the measure in question should be strictly positive. Here, we drop this assumption.}. We will briefly overview the most important examples.

\begin{itemize}
	\item \textbf{strict positivity.} In \cite{Kelley} Kelley fully characterized Boolean algebras supporting strictly positive measures in a combinatorial way (using so called Kelly intersection numbers).
	
	\item \textbf{$\sigma$-additivity \& strict positivity.} The question how to characterize Boolean algebras with strictly positive $\sigma$-additive measures has a long history and it was a driving force for set theoretic measure theory for many
		years. It started with the works of von Neumann and Maharam (\cite{Maharam47}) and ended in 2006 with an article by Talagrand (\cite{Talagrand-Maharam}) with a solution to so called Control Measure Problem. See \cite{MRP} for the more
		detailed history.

	\item \textbf{separability \& strict positivity.} The question when a Boolean algebra $\mathbb{A}$ supports a separable measure was considered by M\"{a}gerl and Namioka (\cite{Namioka}) and Talagrand (\cite{Talagrand}). It is known that this
	property is very close to so called approximability, a purely combinatorial notion which itself is equivalent to weak$^*$ separability of the space of measures on $\mathbb{A}$. However, there is a $\mathsf{ZFC}$ example of an approximable Boolean
algebra which does not support a separable measure (see \cite{MirnaGrzes}).

	\item \textbf{non-separability.}  In \cite{MirnaKunen} D\v{z}amonja and Kunen studied compact spaces carrying only separable measures calling it \emph{measure separable} spaces. We will adapt this terminology for Boolean algebras. Note that if a
		Boolean algebra contains an uncountable independent family, then it is not measure separable. There are many consistent examples (see e.g. \cite{Kunen}) of Boolean algebras without uncountable independent families which do carry a
		nonseparable measure. The question if consistently every Boolean algebra without uncountable independent family is measure separable was known as the Haydon Problem. It was answered in positive by Fremlin (\cite{Fremlin}): such a
		situation holds under $\mathsf{MA}_{\omega_1}$.
	\item \textbf{non-atomicity.} By theorem of Pe\l czy\'nski and Semadeni a Boolean algebra carries an atomless measure if and only if it is not superatomic (equivalently, if its Stone space is not scattered), see \cite{Semadeni}.
\end{itemize}
In Section \ref{mrp} we obtain characterizations of some of the properties mentioned above in forcing terms. We characterize Boolean algebras with strictly positive measure by the property of being a $\sigma$-centered Boolean algebra in some random extension (Theorem
\ref{Kamburelis}), reproving a result of Kamburelis from \cite{Kamburelis}. Using the homomorphism form of names we were able to
make use of a result from ergodic theory and to simplify the original proof of Kamburelis. Also, we deal with $\mathsf{MRP}$(non-atomicity) showing that Boolean algebras carrying an atomless measures are exactly those which do not have new ultrafilters in the
random extension (Proposition \ref{super}). We show that a Boolean algebra carries a non-separable measure if and only if there is a $\mathbb{M}_{\omega_1}$-name for a ultrafilter which cannot be added by the forcing adding a single
random real (Theorem \ref{measure-separable}). We recall certain Kunen's construction to show that consistently there is a 'small' Boolean algebra $\mathbb{A}$ such that forcing with a single random real cannot add 'too generic' ultrafilters on $\mathbb{A}$
(Corollary \ref{pokunen}).

In Section \ref{nontri} we focus on sequences of homomorphisms into measure algebras ignoring its forcing interpretation. We show that on every infinite Boolean algebra one can define a sequence of homomorphisms into the measure algebra which is in a
sense non-trivially convergent: it converges pointwise but not uniformly (Theorem \ref{nontrivii}).

\section{Preliminaries}\label{preliminaries}

We consider non-zero finitely additive measures on Boolean algebras. The following theorem says that each such measure can be uniquely extended to a $\sigma$-additive Radon measure on the Stone space of $\mathbb{A}$.

\begin{thm}(see e.g. \cite[Chapter 5, Theorem 3]{Lacey})  Let $\mathbb{A}$ be a Boolean algebra and let $K$ be its Stone space. Every (finitely additive) measure $\mu$ on $\mathbb{A}$ can be uniquely extended (as a measure on $\mathrm{Clop}(K)$) to a Radon ($\sigma$-additive) measure $\hat{\mu}$
	defined on $K$. Moreover, for each $\varepsilon>0$ and a Borel set $B\subseteq K$, there is a clopen $A\subseteq K$ such that $\hat{\mu}(B \triangle A) <\varepsilon$.
\end{thm}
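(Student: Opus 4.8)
The plan is to pass to the Stone space via Stone duality, observe that finite additivity automatically upgrades to countable additivity at the level of clopen sets (by compactness), and then obtain a Radon measure on all of $K$ by building a positive linear functional on $C(K)$ and invoking the Riesz representation theorem; uniqueness and the clopen-approximation clause will then follow from standard regularity arguments specific to zero-dimensional compacta.

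First I would recall that under Stone duality $\mathbb{A}$ is isomorphic to the algebra $\mathrm{Clop}(K)$ via $a \mapsto \hat a = \{x \in K : a \in x\}$, so that $\mu$ may be regarded as a finitely additive set function on $\mathrm{Clop}(K)$ with $\mu(K) = \mu(\mathbf 1_{\mathbb A}) < \infty$. The key compactness observation is that if a clopen set $C$ is the disjoint union of clopen sets $C_n$, then $C$ being compact and the $C_n$ open, only finitely many $C_n$ are non-empty, so finite additivity already yields $\mu(C) = \sum_n \mu(C_n)$; equivalently, $\mu$ is continuous at $\emptyset$ along decreasing sequences of clopen sets, since such a sequence with empty intersection must stabilize at $\emptyset$.

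Rather than running Carathéodory directly (which would only reach the $\sigma$-algebra generated by $\mathrm{Clop}(K)$, possibly strictly smaller than the Borel $\sigma$-algebra when $K$ is not metrizable), I would build a positive linear functional and appeal to Riesz. Since $K$ is compact and zero-dimensional, every $f \in C(K)$ is a uniform limit of $\mathrm{Clop}(K)$-simple functions $\sum_i c_i \mathbf 1_{C_i}$ with the $C_i$ pairwise disjoint clopen sets (cover $K$ by finitely many clopen sets on each of which $f$ oscillates by less than $\varepsilon$ and disjointify); setting $\Lambda\bigl(\sum_i c_i \mathbf 1_{C_i}\bigr) = \sum_i c_i \mu(C_i)$ defines, after checking well-definedness via common refinements, a positive linear functional on a dense subspace of $C(K)$ with $|\Lambda(g)| \le \|g\|_\infty\,\mu(K)$, hence a unique positive extension $\Lambda\colon C(K)\to\mathbb{R}$. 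The Riesz representation theorem then yields a unique Radon measure $\hat\mu$ on $K$ with $\int f\,d\hat\mu = \Lambda(f)$ for all $f\in C(K)$; testing on $f=\mathbf 1_C$ for $C$ clopen gives $\hat\mu(C)=\Lambda(\mathbf 1_C)=\mu(C)$, so $\hat\mu$ extends $\mu$. For uniqueness of the Radon extension, note that any Radon measure $\nu$ whose restriction to $\mathrm{Clop}(K)$ agrees with $\mu$ produces, by density of the simple functions and continuity of the integral, the same functional $\Lambda$, and is therefore equal to $\hat\mu$ by the uniqueness clause of Riesz.

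For the approximation statement, fix $\varepsilon>0$ and a Borel set $B\subseteq K$. By outer regularity of $\hat\mu$ pick an open $U\supseteq B$ with $\hat\mu(U\setminus B)<\varepsilon/2$, and by inner regularity pick a compact $L\subseteq B$ with $\hat\mu(B\setminus L)<\varepsilon/2$. Since the clopen sets form a basis of $K$, each point of $L$ has a clopen neighbourhood contained in $U$; by compactness finitely many of them cover $L$, and their union $A$ is clopen with $L\subseteq A\subseteq U$. Then $A\setminus B\subseteq U\setminus B$ and $B\setminus A\subseteq B\setminus L$, so $\hat\mu(A\triangle B)\le\hat\mu(U\setminus B)+\hat\mu(B\setminus L)<\varepsilon$. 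I expect the only genuinely delicate point to be staying inside the full Borel (and not merely the Baire) $\sigma$-algebra while retaining uniqueness — which is precisely what the Riesz route handles cleanly — everything else being routine once the compactness observations are in place.
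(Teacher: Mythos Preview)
Your argument is correct and essentially the standard one: the compactness observation that a countable disjoint clopen cover of a clopen set is necessarily finite, the construction of a positive linear functional on $C(K)$ via density of clopen-simple functions, the appeal to Riesz for existence and uniqueness of the Radon extension, and the inner/outer regularity plus clopen-basis argument for the approximation clause are all sound.

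There is nothing to compare against, however: the paper does not prove this theorem but merely quotes it with a reference to \cite[Chapter 5, Theorem 3]{Lacey}. Your write-up supplies exactly the kind of proof one would expect to find behind that citation.
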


In what follows, for a measure $\mu$ on a Boolean algebra we will denote by $\hat{\mu}$ the extension promised by the above theorem. 

For a Boolean algebra $\mathbb{A}$ let $\mathbb{A}^+ = \mathbb{A} \setminus \{0\}$. We say that a Boolean algebra $\mathbb{A}$ \emph{supports a measure} $\mu$ if $\mu(A)>0$ for each $A\in \mathbb{A}^+$ (in this case we say that $\mu$ is
\emph{strictly positive} on $\mathbb{A}$). If $\mu$ is a measure on $\mathbb{A}$, which is not necessarily strictly positive, then we say that $\mathbb{A}$ \emph{carries} $\mu$.

	For a cardinal $\kappa$ denote by $\lambda_\kappa$ the standard (product) measure on $\{0,1\}^\kappa$. Let $\mathbb{M}_\kappa = \mathrm{Bor}(\{0,1\}^\kappa)/_{\lambda_\kappa = 0}$. We will call $\mathbb{M}_\kappa$ \emph{measure algebras}. Note
	that $\mathbb{M}_1$ is the trivial algebra and $\mathbb{M}_\omega$ is usually called \emph{the measure algebra} (and will be denoted by $\mathbb{M}$). 
	By saying that we add $\kappa$ ($\kappa>\omega$) random reals, we mean that we force with $\mathbb{M}_\kappa$. Adding a single random real means forcing with $\mathbb{M}$. 

  A measure $\mu$ on $\mathbb{A}$ is \emph{atomless} if its
  extension to $\hat{\mu}$ vanishes on points. Equivalently, $\mu$ is atomless if for each $\varepsilon>0$ there is a partition of unity $(A_n)_{n<N}$ such that $\mu(A_n)<\varepsilon$ for every $n<N$.

	If $\mathbb{A}$ carries a measure $\mu$, then the function $d_\mu\colon \mathbb{A} \to \mathbb{R}$ defined by $d_\mu(A,B) = \mu(A\triangle B)$ is a pseudo-metric (which is a metric, if $\mu$ is strictly positive). We will call it \emph{a
	Frechet-Nikodym (pseudo-)metric}. By the \emph{Maharam type of $\mu$} we understand the density of the pseudometric space $(\mathbb{A},d_\mu)$. We say that a measure $\mu$ on a Boolean algebra $\mathbb{A}$ is \emph{homogeneuous} if $\mu_{|A}$ has
	the same Maharam type for each $A\in \mathbb{A}^+$.

		We say that a homomorphism $\varphi\colon (\mathbb{A},\mu) \to (\mathbb{B},\nu)$ is \emph{metric} if $\nu(\varphi(A))=\mu(A)$ for each $A\in \mathbb{A}$. We will say that a Boolean algebra $\mathbb{A}$ is metrically isomorphic to $\mathbb{B}$ if
	it is clear which measures on $\mathbb{A}$ and $\mathbb{B}$ we mean.

\begin{thm}[Maharam's theorem] \label{Maharam} (see \cite{Maharam})
	If a Boolean algebra $\mathbb{A}$ supports a countably additive measure $\mu$, then $(\mathbb{A},\mu)$ is metrically isomorphic to a direct sum of measure algebras (with its standard measures). If $\mu$ is homogeneous of Maharam type $\kappa$, then $(\mathbb{A},\mu)$ is metrically
isomorphic to $\mathbb{M}_\kappa$.
\end{thm}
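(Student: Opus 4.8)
The plan is to follow the classical two-step route to Maharam's theorem: first reduce to the homogeneous case, and then prove the structure theorem for homogeneous measure algebras by induction on the Maharam type, with a lemma on the existence of relatively independent elements as the engine. \emph{Reduction to the homogeneous case.} Passing to the metric completion of $(\mathbb{A},d_\mu)$ --- which by the extension theorem quoted above is a Dedekind complete Boolean algebra carrying a strictly positive countably additive measure and containing $\mathbb{A}$ as a $d_\mu$-dense subalgebra --- we may assume that $\mathbb{A}$ is complete and $\mu(1_{\mathbb{A}})=1$. For $a\in\mathbb{A}^{+}$ let $\tau(a)$ denote the Maharam type of $\mathbb{A}_{|a}$; since $x\mapsto x\wedge a$ is a contractive surjection $\mathbb{A}_{|b}\to\mathbb{A}_{|a}$ whenever $a\le b$, the function $\tau$ is monotone, so $\{\tau(b):0\ne b\le a\}$ has a least element, attained on some $b\le a$, and then $\tau(c)=\tau(b)$ for every nonzero $c\le b$, that is, $\mathbb{A}_{|b}$ is homogeneous. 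A maximal pairwise disjoint family $(a_i)_{i\in I}$ for which every $\mathbb{A}_{|a_i}$ is homogeneous must satisfy $\bigvee_i a_i=1_{\mathbb{A}}$ (apply the previous remark below $1_{\mathbb{A}}-\bigvee_i a_i$ and invoke maximality and completeness), and $I$ is countable because $\mu$ is finite and strictly positive. Hence $(\mathbb{A},\mu)=\bigoplus_i(\mathbb{A}_{|a_i},\mu_{|a_i})$, and it suffices to prove that a homogeneous measure algebra of Maharam type $\kappa$ with $\mu(1_{\mathbb{A}})=1$ is metrically isomorphic to $\mathbb{M}_\kappa$; the general statement then follows by taking the direct sum of the corresponding copies of $\mathbb{M}_{\tau(a_i)}$ rescaled to total mass $\mu(a_i)$.

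\emph{The homogeneous case, by induction on $\kappa$.} If $\kappa$ is finite then $(\mathbb{A},d_\mu)$ is a finite metric space, so $\mathbb{A}$ is a finite Boolean algebra, and homogeneity forces it to be trivial (it is trivial below any atom, hence everywhere), i.e. $\mathbb{A}\cong\mathbb{M}_1$. So assume $\kappa\ge\omega$; then $\mathbb{A}$ is atomless. The key lemma is the following: \emph{if $\mathbb{C}\subsetneq\mathbb{A}$ is a $d_\mu$-closed subalgebra, then there is $e\in\mathbb{A}$ with $\mu(e)=\tfrac12$ and $\mu(e\wedge c)=\tfrac12\,\mu(c)$ for every $c\in\mathbb{C}$.} Granting this lemma, one verifies --- using the inductive hypothesis to identify, through their countably many homogeneous components, the $d_\mu$-closed subalgebras of type $<\kappa$ that arise --- that any metric isomorphism between $d_\mu$-closed subalgebras of type $<\kappa$ of $\mathbb{A}$ and of $\mathbb{M}_\kappa$ can be extended so as to contain a prescribed element on either side, since both $\mathbb{A}$ and $\mathbb{M}_\kappa$ are homogeneous of type $\kappa$ and therefore have ``relative Maharam type $\kappa$'' over every such subalgebra. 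A back-and-forth construction of length $\kappa$ --- enumerating $d_\mu$-dense subsets of $\mathbb{A}$ and of $\mathbb{M}_\kappa$ of size $\kappa$, alternately absorbing their members and taking closures of unions at limit stages --- then yields a metric isomorphism $\mathbb{A}\to\mathbb{M}_\kappa$; density together with completeness of $\mathbb{A}$ guarantees that the increasing union of the $d_\mu$-closed subalgebras built up on the $\mathbb{A}$-side is all of $\mathbb{A}$, and symmetrically for $\mathbb{M}_\kappa$.

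\emph{The key lemma --- the main obstacle.} Let $\mathbb{C}\subsetneq\mathbb{A}$ be $d_\mu$-closed and let $P$ be the conditional-expectation projection of $L^2(\mathbb{A},\mu)$ onto $L^2(\mathbb{C},\mu)$. Since $\mathbb{C}$ is proper there is $u\in\mathbb{A}$ with $P\chi_u\ne\chi_u$, and because $P\chi_u$ equals $1$ (resp. $0$) only where $\chi_u$ does, the set $\{0<P\chi_u<1\}$ has positive measure; thus $\mathbb{A}$ genuinely splits some $\mathbb{C}$-piece. One upgrades this to a splitting of conditional density exactly $\tfrac12$ by an exhaustion argument: let $e^{*}\in\mathbb{A}$ be maximal, along increasing chains, among elements $e$ with $P\chi_e\le\tfrac12$ pointwise. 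If $d=\{P\chi_{e^{*}}<\tfrac12\}$ were nonzero, then $d\in\mathbb{C}$ and, by homogeneity, $\mathrm{type}(\mathbb{C}_{|d})\le\mathrm{type}(\mathbb{C})<\kappa=\mathrm{type}(\mathbb{A}_{|d})$, so $\mathbb{C}_{|d}$ is still a proper subalgebra of $\mathbb{A}_{|d}$ and one could enlarge $e^{*}$ below $d$, contradicting maximality; hence $P\chi_{e^{*}}\equiv\tfrac12$, and $e=e^{*}$ works because $\mu(e\wedge c)=\int\chi_c\,P\chi_{e^{*}}\,d\mu=\tfrac12\mu(c)$ for every $c\in\mathbb{C}$. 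The genuinely delicate point --- and the one I expect to cost the most work --- is the enlargement step, namely that below a nonzero $\mathbb{C}$-piece on which the conditional density has not yet reached $\tfrac12$ there is always a nonzero element of sufficiently small conditional density to adjoin. This rests on $\mathbb{A}$ being \emph{relatively atomless} over $d_\mu$-closed subalgebras of smaller type, a property one extracts from atomlessness and homogeneity of $\mathbb{A}$, and which, set up carefully, is really the inductive core of the theorem.
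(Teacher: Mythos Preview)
The paper does not prove this theorem: it is quoted from the literature (with a reference to Maharam's original paper) and used as a black box, so there is no ``paper's own proof'' to compare against.

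Your proposal is a faithful outline of the classical proof (essentially Maharam's original argument, as presented for instance in Fremlin's \emph{Measure Theory}, \S331--332): decompose into homogeneous pieces, then run a back-and-forth over a dense set of size $\kappa$, with Maharam's lemma on relatively independent elements as the extension step. The structure is sound and the identification of the ``relative atomlessness'' step as the crux is accurate.

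Two remarks on points you glossed. First, in the reduction step you pass to the metric completion of $(\mathbb{A},d_\mu)$; this is harmless for the paper's purposes but note that the theorem as literally stated in the paper is slightly imprecise for non-complete $\mathbb{A}$ --- the conclusion really describes the completion. Second, your maximality argument for the key lemma needs Zorn applied to increasing chains of elements $e$ with $P\chi_e\le\tfrac12$; you should check that suprema of such chains exist and stay in the class, which uses completeness of $\mathbb{A}$ and normality of the conditional expectation $P$ (i.e.\ that $P$ commutes with monotone limits). That is routine but worth saying. The ``enlargement below $d$'' step you flag is indeed the heart of it; the clean formulation is that if $\mathbb{C}_{|d}$ has smaller type than $\mathbb{A}_{|d}$ then there is $u\le d$ with $0\ne P_{|d}\chi_u\ne\chi_u$, and then an intermediate-value argument in $\mathbb{A}_{|d}$ (using atomlessness) lets you carve off a piece with conditional density at most $\tfrac12-P\chi_{e^*}$ on $d$.
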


A measure $\mu$ on a Boolean algebra is \emph{separable} if $(\mathbb{A}, d_\mu)$ is separable. Note that $\mu$ on $\mathbb{A}$ is separable if and only if the space $L_1(\hat{\mu})$ is separable. It will be convenient to make the following consequence of Maharam's theorem available.

\begin{prop}\label{Maharam-conclusion} Assume that $\mu$ is a stricly positive separable measure on $\mathbb{A}$. Then there is an injective metric homomorphism $\varphi\colon (\mathbb{A}, \mu) \to (\mathbb{M},\lambda)$.
\end{prop}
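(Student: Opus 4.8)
The plan is to apply Maharam's theorem to the measure $\mu$ on $\mathbb{A}$ and then post-compose the resulting metric isomorphism with a metric embedding of the relevant direct sum of measure algebras into $\mathbb{M}$. First I would pass to the metric completion: the pseudmetric space $(\mathbb{A}, d_\mu)$ — or rather its metric quotient, which by strict positivity is just $\mathbb{A}$ itself — embeds isometrically into its completion, which is a (complete) Boolean algebra supporting a strictly positive $\sigma$-additive measure extending $\mu$. By Maharam's theorem (Theorem \ref{Maharam}) this completion is metrically isomorphic to a direct sum $\bigoplus_{i\in I}\mathbb{M}_{\kappa_i}$ with its standard measure, say via $\psi$. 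Separability of $\mu$ means $(\mathbb{A}, d_\mu)$ is separable, hence so is its completion, which forces the index set $I$ to be countable and each $\kappa_i\le\omega$; thus the completion is metrically isomorphic to a finite or countable direct sum of copies of $\mathbb{M}$ and of finite measure algebras, the total mass being $\mu(1_{\mathbb{A}})$.

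Next I would produce an injective metric homomorphism from such a countable direct sum $\bigoplus_{i\in I}\mathbb{M}_{\kappa_i}$ (with $\kappa_i\le\omega$, total mass arbitrary but finite) into $(\mathbb{M},\lambda)$. Rescale so the total mass is $1$; pick a partition of unity $(b_i)_{i\in I}$ in $\mathbb{M}$ with $\lambda(b_i)$ equal to the mass of the $i$-th summand. Each relative algebra $\mathbb{M}\!\restriction\! b_i$, rescaled by $1/\lambda(b_i)$, is itself a homogeneous measure algebra of Maharam type $\omega$, hence metrically isomorphic to $\mathbb{M}$ by the second part of Maharam's theorem; and $\mathbb{M}_{\kappa_i}$ for $\kappa_i\le\omega$ embeds metrically (after rescaling) into $\mathbb{M}$, trivially when $\mathbb{M}_{\kappa_i}$ is finite and by a metric isomorphism when $\kappa_i=\omega$. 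Assembling these coordinatewise gives an injective metric homomorphism $\theta\colon\bigoplus_{i\in I}\mathbb{M}_{\kappa_i}\to\mathbb{M}$. Composing, $\varphi = \theta\circ(\text{inclusion})\circ\psi^{-1}|_{\mathbb{A}}$ — more precisely, $\psi$ restricted to the image of $\mathbb{A}$ inside the completion — is the desired injective metric homomorphism $(\mathbb{A},\mu)\to(\mathbb{M},\lambda)$; injectivity is automatic since a metric homomorphism out of a space with a strictly positive measure has trivial kernel.

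The main obstacle, and the only place requiring genuine care, is the passage through the metric completion: Maharam's theorem as stated applies to Boolean algebras \emph{supporting} a countably additive measure, whereas $\mathbb{A}$ only carries the finitely additive $\mu$ and need not be complete or $\sigma$-complete. One must check that the completion $\widehat{\mathbb{A}}$ of $(\mathbb{A},d_\mu)$ is a Boolean algebra (the Boolean operations are $d_\mu$-uniformly continuous, hence extend), that the extended measure is genuinely $\sigma$-additive and strictly positive on $\widehat{\mathbb{A}}$, and that the canonical map $\mathbb{A}\to\widehat{\mathbb{A}}$ is an injective metric homomorphism. Alternatively — and perhaps more cleanly — one can sidestep this by working with the Stone space $K$ of $\mathbb{A}$ and the Radon extension $\hat\mu$ from the first theorem of this section: the measure algebra $\mathrm{Bor}(K)/_{\hat\mu=0}$ is complete, supports the (strictly positive) induced measure, is separable because $L_1(\hat\mu)$ is, and receives $\mathbb{A}$ via the injective metric homomorphism $A\mapsto [\widehat A\,]$ (injective by strict positivity of $\mu$). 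Either route feeds into Maharam's theorem and the explicit embedding of the summand side into $\mathbb{M}$, after which the composition is routine.
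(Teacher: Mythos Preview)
Your proposal is correct, and the alternative route you describe via the Stone space and $\mathrm{Bor}(K)/_{\hat\mu=0}$ is exactly the paper's argument (your metric-completion route is essentially equivalent, since that completion is canonically isomorphic to $\mathrm{Bor}(K)/_{\hat\mu=0}$). The paper is terser: it simply asserts a metric isomorphism $\mathrm{Bor}(K)/_{\hat\mu=0}\to\mathbb{M}$ and restricts to clopens, glossing over the possible atomic summands in the Maharam decomposition that you handle explicitly with your partition-and-embed step.
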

\begin{proof}
	Let $K$ be the Stone space of $\mathbb{A}$. Notice that if $\mu$ is separable, then $\hat{\mu}$ is separable on $\mathrm{Bor}(K)_{/\hat{\mu} = 0}$ (since Borel sets can be approximated with respect to $\hat{\mu}$ with clopens). Then, by Theorem \ref{Maharam}, there is a metric isomorphism  $\varphi'\colon \mathrm{Bor}(K)_{/\hat{\mu} = 0} \to
	\mathbb{M}$. Let $\varphi = \varphi'_{| \mathrm{Clop}(K)}$. 
\end{proof}

A Boolean algebra $\mathbb{A}$ is \emph{$\sigma$-centered} if $\mathbb{A}^+$ is a union of countably many ultrafilters on $\mathbb{A}$, or, equivalently, if $\mathrm{St}(\mathbb{A})$ is separable. Note that if $\mathbb{A}$ is $\sigma$-centered
	Boolean algebra witnessed by $\{\mathcal{U}_n\colon n\in \omega\}$, then $\mu = \sum \delta_{\mathcal{U}_n}/2^{n+1}$ is strictly positive on $\mathbb{A}$ (and so, $\mathbb{A}$ supports a measure).

In general we denote elements of Boolean algebras, including $\mathbb{M}_\kappa$'s, with capital letters. However, speaking about an element of $\mathbb{M}_\kappa$ sometimes we understand it as a condition of the forcing notion $\mathbb{M}^+_\kappa$. In this case, we will rather use the standard forcing notation: $p$, $q$ and so on. 

By \emph{a real} we will understand an element of the Cantor set $2^\omega$ (or, equivalently, by the standard identification of $2^\omega$ with $\mathcal{P}(\omega)$, a subset of $\omega$). 

For unexplained terminology consult \cite{Kunen-ST} and \cite{Bartoszynski} (forcing),  \cite{Halmos} (measure theory).

\section{Homomorphisms and names for ultrafilters}\label{homomorphisms}

Suppose that we force by $\mathbb{M}_\kappa$ over a model $V$. Let $\mathbb{A}$ be a Boolean algebra in $V$. Then, in
$V[G]$, $\mathbb{A}$ also forms a Boolean algebra. (Formally, we should speak here about the canonical name $\check{\mathbb{A}}$ for $\mathbb{A}$ but we are going to abuse the notation by identifying objects from the ground model with its canonical
names.)  Typically, in $V[G]$ there are new ultrafilters on $\mathbb{A}$. 

A priori the access to those 'new' ultrafilters is quite remote. However, the point is that there is a natural correspondence between $\mathbb{M}_\kappa$-names of ultrafilters on $\mathbb{A}$ and homomorphisms $\varphi\colon \mathbb{A} \to \mathbb{M}_\kappa$.
Indeed, assume $\dot{\mathcal{U}}$ is a $\mathbb{M}_\kappa$-name for an ultrafilter on a Boolean algebra $\mathbb{A}$. Define $\varphi\colon \mathbb{A} \to \mathbb{M}_\kappa$ by $\varphi(A) = \llbracket A\in \dot{\mathcal{U}}\rrbracket$, where
$\llbracket \alpha \rrbracket$ is the truth value of $\alpha$ (see e.g. \cite[Definition 7.14]{Kunen}). It is plain to check
that it is a Boolean homomorphism. On the other hand, fix a homomorphism $\varphi\colon \mathbb{A} \to \mathbb{M}_\kappa$ and let \[ \dot{\mathcal{U}} = \{\langle A, \varphi(A) \rangle\colon A\in \mathbb{A}\}. \]
Then $\dot{\mathcal{U}}$ is an $\mathbb{M}_\kappa$-name for an ultrafilter on $\mathbb{A}$.

So, every homomorphism $\varphi\colon \mathbb{A} \to \mathbb{M}_\kappa$ can be interpreted as an $\mathbb{M}_\kappa$-name for an ultrafilter on $\mathbb{A}$ (and will be denoted by $\dot{\varphi}$ in this case).
Also, if $\dot{\mathcal{U}}$ is an $\mathbb{M}_\kappa$-name for an ultrafilter on $\mathbb{A}$, then we may assume that it is of the form $\dot{\varphi}$ for some homomorphism $\varphi$ (since $\Vdash_{\mathbb{M}_\kappa}
\dot{\varphi} = \dot{\mathcal{U}}$, where $\varphi$ is defined by $\varphi(A) = \llbracket A\in \dot{U}\rrbracket$).

If $\varphi\colon \mathbb{A} \to \mathbb{M}_\kappa$ is a Boolean homomorphism, then $\mu = \lambda \circ \varphi$ is a measure. We will call it \emph{the measure induced} by $\varphi$. In this way, each $\mathbb{M}_\kappa$-name $\dot{\varphi}$ for
ultrafilter on $\mathbb{A}$ induces a measure on $\mathbb{A}$.

\begin{exa}\label{old} If the range of a homomorphism $\varphi\colon \mathbb{A} \to \mathbb{M}$ is trivial, then $\mathcal{U} = \varphi^{-1}[\{1\}]$ is an ultrafilter on $\mathbb{A}$ (in $V$). In this case $\dot{\varphi}$ is a canonical name for $\mathcal{U}$
	and $\lambda\circ \varphi = \delta_\mathcal{U}$, the Dirac delta at $\mathcal{U}$.
	See also Proposition \ref{purely-atomic}.
\end{exa}

The next simple example shows that there may be two names for ultrafilters which induce the same measure but which are interpreted as distinct points by each generic filter.

\begin{exa} Consider a Boolean algebra $\mathbb{A}$ and  $\mathcal{U}_0 \ne \mathcal{U}_1 \in \mathrm{St}(\mathbb{A})$. Let $M\in \mathbb{M}$ be such that $\lambda(M)=1/2$ and for $i\in \{0,1\}$ define $\varphi_i \colon \mathbb{A} \to \mathbb{M}$ 
$$
\varphi_i(A) =
\begin{cases}
	1  \mbox{ if } A \in \mathcal{U}_1 \cap \mathcal{U}_0, \\
	M  \mbox{ if } A \in \mathcal{U}_i \setminus \mathcal{U}_{1-i},\\
	M^c\mbox{ if } A \in \mathcal{U}_{1-i} \setminus \mathcal{U}_i, \\
	0  \mbox{ otherwise. }
\end{cases}
$$
	Then $\varphi_0$ and $\varphi_1$ induce the same measure: \[ \lambda\circ \varphi_0 = \lambda\circ \varphi_1 = 1/2( \delta_{\mathcal{U}_0} + \delta_{\mathcal{U}_1}), \]
	but $\Vdash_\mathbb{M} \dot{\varphi_0} \ne \dot{\varphi_1}$.
\end{exa}

The above example shows that we have to be careful if we want to deduce something about the relationship of names for ultrafilters by looking at the induced measures.

Perhaps the most important Boolean algebra to study in this context is the Cantor algebra, as its ultrafilters can be interpreted as reals. 

\begin{exa}\label{cantor} Consider the Cantor algebra $\mathbb{C} = \mathrm{Clop}(2^\omega)$. 
	Let $\varphi\colon \mathbb{C} \to \mathbb{M}$ be a homomorphism sending clopens to its equivalence classes in $\mathbb{M}$. Then
	$\dot{\varphi}$ is a name for the generic random real (see \cite[Definition 3.9]{Kunen-cohen-and-random}). In general, suppose that $\varphi\colon \mathbb{C} \to \mathbb{M}_\kappa$ is a homomorphism and $\mu = \lambda_\kappa \circ \varphi$. If
	$\mu$ is atomless, then $\dot{\varphi}$ is a name for a 'new' real in the following sense: for each $x\in 2^\omega$ there is $p\in \mathbb{M}_\kappa$ such that $p\Vdash x \ne \dot{\varphi}$. Indeed, take $C\in \mathbb{C}$ such that $x\notin C$
	and $\mu(C)>0$. Then $\varphi(C) \Vdash x \ne \dot{\varphi}$.
\end{exa}

\begin{rem}
	Recall that if $\dot{r}$ is a random real then it avoids measure zero sets coded in the ground model (see \cite[Lemma 3.8]{Kunen-cohen-and-random}). An example of application of this fact is the following. If ($\alpha$) is a Borel property of real numbers such that $\{x\in
	2^\omega\colon x $  has ($\alpha$)$\}$ is of full measure, then $\Vdash_{\mathbb{M}_\kappa} \dot{r}$ possess ($\alpha$), where $\dot{r}$ is the generic random real. So, e.g., Strong Law of Large Numbers implies that $\Vdash_{\mathbb{M}_\kappa}$ $\dot{r}$ has asymptotic density 1/2 (seen
	as a subset of $\omega$). 
	Using measures we can find names for new reals which have some additional properties. Let $(\alpha)$ be a Borel property of reals such that $F = \{x \in 2^\omega \colon x\mbox{ has }(\alpha)\}$ is not scattered. Then there is an atomless measure $\mu$
	on $\mathbb{C}$ such that $\hat{\mu}(F)=1$. By Proposition \ref{Maharam-conclusion}, there is $\kappa$
	and a homomorphism $\varphi\colon \mathbb{C} \to \mathbb{M}_\kappa$ such that $\mu = \lambda_\kappa \circ \varphi$. Then $\dot{\varphi}$ is a name for a new real (see Example \ref{cantor}) such that 
	$ \Vdash_{\mathbb{M}_{\kappa}} \dot{\varphi} \mbox{ has }(\alpha)$. \end{rem}

The next example will concern the algebra $\mathcal{P}(\omega)$. The Boolean algebra of those subsets of $\omega$ which belongs to the ground model is a subalgebra of $\mathcal{P}(\omega)$ in the forcing extension (which is a proper subalgebra if the
forcing adds new reals). The Stone spaces of such Boolean algebras of 'old' subsets of $\omega$ were considered e.g. in \cite{DamLub} and in the context of random forcing in \cite{DowFremlin}. 
The measures on $\mathcal{P}(\omega)$ were studied e.g. in \cite{FremlinTalagrand}. Note that there are many atomless measures on $\mathcal{P}(\omega)$, although none of them can be strictly positive. A typical example of such a measure is of the
form $\mu(A) = \lim_{n\to \mathcal{U}} |A \cap n|/ n$, where $\mathcal{U}$ is an ultrafilter on $\mathcal{P}(\omega)$ (see \cite{PlebanekRyll}). Such measure extends the asymptotic density. 

Recall that an ultrafilter $\mathcal{U}$ on $\mathcal{P}(\omega)$ is a P-point if $\mathcal{U}$ is $\sigma$-closed with respect to $\subseteq^*$, i.e. inclusion modulo finite sets. Notice that this definition makes sense also for subalgebras of
$\mathcal{P}(\omega)$, in particular of $\mathcal{P}(\omega) \cap V$ if we work in a forcing extension. The following fact says that the new ultrafilters on $\mathcal{P}(\omega) \cap V$ cannot be P-points. 

\begin{prop}\label{ppoint} Suppose that $\varphi\colon \mathcal{P}(\omega) \to \mathbb{M}$ is such that $\mu = \lambda \circ \varphi$ is atomless. Then $\Vdash_\mathbb{M} \dot{\varphi}$ is not a P-point on $\mathcal{P}(\omega) \cap V$. 
\end{prop}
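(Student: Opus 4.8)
The plan is to manufacture, entirely from ground-model data, a name for a $\subseteq$-decreasing $\omega$-sequence lying in $\dot\varphi$ all of whose pseudo-intersections are forced to have $\mu$-measure $0$, and then to note that a $\mu$-null ground-model set can never belong to $\dot\varphi$. So the non-P-point-ness is witnessed uniformly below $\mathbf 1$.

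First I would extract two elementary consequences of atomlessness of $\mu$. Since $\{k\}$ is an atom of $\mathcal P(\omega)$, in any finite partition of unity $(A_n)_{n<N}$ there is exactly one $n$ with $\{k\}\subseteq A_n$, so the definition of atomlessness (partitions of unity with pieces of arbitrarily small measure) forces $\mu(\{k\})=0$ for every $k$, and hence $\mu(F)=0$ for every finite $F\subseteq\omega$ by finite additivity. Second, using atomlessness again, for each $n$ I fix in $V$ a finite partition of unity $\mathcal R_n=\{R^n_1,\dots,R^n_{m_n}\}$ of $\mathcal P(\omega)$ (i.e.\ a partition of $\omega$ into finitely many pieces) with $\mu(R^n_i)<2^{-n}$ for all $i$. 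Since $\dot\varphi$ is forced to be an ultrafilter on $\mathcal P(\omega)$, exactly one member of $\mathcal R_n$ lies in $\dot\varphi$; let $\dot S_n$ be the corresponding $\mathbb M$-name. Now argue in $V[G]$: set $T_n=\dot S_0[G]\cap\dots\cap\dot S_n[G]$, a $\subseteq$-decreasing sequence of ground-model sets all in $\dot\varphi[G]$. If $\dot\varphi[G]$ were a P-point, $\sigma$-closure under $\subseteq^*$ would give $A\in\dot\varphi[G]\cap V$ with $A\subseteq^* T_n\subseteq\dot S_n[G]$ for every $n$; then $A\setminus\dot S_n[G]$ is finite, so $\mu(A\setminus\dot S_n[G])=0$ and $\mu(A)=\mu(A\cap\dot S_n[G])\le\mu(\dot S_n[G])<2^{-n}$ for all $n$, whence $\mu(A)=0$. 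But $\mu(A)=0$ means $\lambda(\varphi(A))=0$, so $\varphi(A)=0_{\mathbb M}$ (the Haar measure on $\mathbb M$ is strictly positive), and therefore $A\notin\dot\varphi[G]$ because $0_{\mathbb M}\notin G$ — a contradiction. Hence $\Vdash_{\mathbb M}\dot\varphi$ is not a P-point on $\mathcal P(\omega)\cap V$.

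I do not anticipate a real obstacle here; the proof is short. The two points that need a little care are: (i) that $\dot\varphi$ meets each finite partition of unity $\mathcal R_n$ in exactly one element, which is just the standard behaviour of ultrafilters in a Boolean algebra (the meet of the complements is $0$), and (ii) keeping track of where objects live — the sequence $(T_n)_{n}$ belongs to $V[G]$, not $V$, which is precisely why the construction is phrased through the names $\dot S_n$ rather than by choosing a decreasing sequence in the ground model. (One could also observe that $\mu$ is automatically separable, since $\mathbb M$ is, but this is not used.)
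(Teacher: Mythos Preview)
Your proof is correct and follows essentially the same route as the paper: choose in $V$ finite partitions of $\omega$ into pieces of small $\mu$-measure, pick from each the unique member lying in the ultrafilter, and observe that any pseudo-intersection in $V$ must be $\mu$-null and hence outside $\dot\varphi$. The only differences are cosmetic --- you use $2^{-n}$ instead of $1/n$, intersect to make the sequence decreasing, and spell out explicitly that $\mu$ vanishes on finite sets (which the paper leaves implicit under ``because $\mu$ is atomless'').
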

\begin{proof}
	 Denote $\mathbb{A} = \mathcal{P}(\omega)$. 
	 Since
	 $\mu$ is atomless for every $n$ we can find a maximal finite antichain $\mathcal{A}_n$ in $\mathbb{A}$ such that $\mu(A)<1/n$ for each $A\in \mathcal{A}_n$. Let $G$ be a generic  filter on $\mathbb{M}$. From this point on we will work in $V[G]$,
	 understanding $\mathbb{A}$ as $\mathcal{P}(\omega)\cap V$. Notice that for each $n$ there is exactly one $A_n \in \mathcal{A}_n$ such that $\varphi(A_n) \in G$. 
	 Then $(A_n)$ is a sequence of elements of $\dot{\varphi}$. But if $A\in \mathbb{A}$ is such that $A\subseteq^* A_n$ for each $n$, then $\mu(A) =0$ (because $\mu$ is atomless). Hence $\varphi(A)=0$  and so  
	 $ 1 \Vdash A\notin \dot{\varphi}$. Thus, $(A_n)$ witnesses that $\dot{\varphi}$ is not a P-point.
\end{proof}

Note that Kunen in \cite{Kunen-special-points} used measures on $\omega$ in a slightly different way to show that in the random model there are no selective ultrafilters.

Now we will prove a result showing that in the forcing extension by $\mathbb{M}_{\omega_2}$ over a model satisfying $\mathsf{GCH}$ (so in the classical random model) the sets of reals of size $\mathfrak{c}$ have to be in a sense homogeneous. 

\begin{thm}\label{Compatible} Assume $GCH$. Let $(\varphi_\alpha)_{\alpha<\omega_2}$ be a family of homomorphisms $\varphi_\alpha\colon \mathbb{C} \to \mathbb{M}_{\omega_2}$. Then there are $\alpha<\beta<\omega_2$ and an automorphism $\Phi\colon
	\mathbb{M}_{\omega_2} \to
	\mathbb{M}_{\omega_2}$ such that $\Phi\circ \varphi_\alpha(C) = \varphi_\beta(C)$ and $\Phi\circ \varphi_\beta(C) = \varphi_\alpha(C)$ for every $C\in \mathbb{C}$.
\end{thm}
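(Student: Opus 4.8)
The strategy is a $\Delta$-system / counting argument followed by a ``back-and-forth'' construction of the swapping automorphism. First I would observe that each homomorphism $\varphi_\alpha\colon \mathbb{C}\to\mathbb{M}_{\omega_2}$ is determined by countably much data: $\mathbb{C}$ is countable, and for each $C\in\mathbb{C}$ the value $\varphi_\alpha(C)$ is an element of $\mathbb{M}_{\omega_2}$, which (being a countably generated measure algebra element) depends only on countably many coordinates. Hence there is a countable set $S_\alpha\subseteq\omega_2$ such that $\varphi_\alpha$ factors through the canonical copy of $\mathbb{M}_\omega$ (or $\mathbb{M}_{|S_\alpha|}$) sitting on the coordinates in $S_\alpha$; more precisely, $\varphi_\alpha[\mathbb{C}]\subseteq\mathbb{M}_{S_\alpha}$, where $\mathbb{M}_{S_\alpha}=\mathrm{Bor}(2^{S_\alpha})/_{\lambda=0}$ is the subalgebra of $\mathbb{M}_{\omega_2}$ of classes depending only on coordinates in $S_\alpha$.

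Next I would apply the $\Delta$-system lemma to $(S_\alpha)_{\alpha<\omega_2}$. Since $\mathsf{GCH}$ holds, $\omega_2$ is regular and $\omega_2^{\omega}=\omega_2$, so by the $\Delta$-system lemma for countable sets there is a set $I\subseteq\omega_2$ of size $\omega_2$ and a countable \emph{root} $R$ such that $S_\alpha\cap S_\beta=R$ for all distinct $\alpha,\beta\in I$. Now use a pigeonhole/counting step: $\mathbb{M}_R$ is a fixed measure algebra of size $\mathfrak{c}=\omega_1<\omega_2$ (by $\mathsf{GCH}$), so there are at most $\mathfrak{c}^{\aleph_0}=\mathfrak{c}=\omega_1$ possible ``types'' for the restriction-to-$R$ behaviour of $\varphi_\alpha$ — formally, after fixing for each $\alpha\in I$ an order-isomorphism $R\cong R$ (trivial) and an enumeration, the composite map $\mathbb{C}\to\mathbb{M}_{S_\alpha}\to$ ``isomorphism type over $R$ together with how much lands in $\mathbb{M}_R$'' ranges over a set of size $\le\omega_1$. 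Since $|I|=\omega_2>\omega_1$, there are $\alpha\ne\beta$ in $I$ with the same type. Here one must be slightly careful about what ``same type'' should mean; the cleanest choice is: there is a measure-preserving isomorphism $\theta\colon\mathbb{M}_{S_\alpha}\to\mathbb{M}_{S_\beta}$ which is the identity on $\mathbb{M}_R$ and satisfies $\theta\circ\varphi_\alpha=\varphi_\beta$. That such an $\alpha,\beta$ exist is where the counting really happens: the pair $(\varphi_\alpha,\varphi_\beta)$ determines, and is determined by (up to such $\theta$), an element of a set of size $\le\omega_1$, namely the pair of a measure-algebra isomorphism type of $(\mathbb{M}_{S_\alpha},\varphi_\alpha)$ rel $\mathbb{M}_R$; invoke Maharam's theorem (Theorem~\ref{Maharam}) to see that $(\mathbb{M}_{S_\alpha},\mathbb{M}_R)$ and $(\mathbb{M}_{S_\beta},\mathbb{M}_R)$ are always abstractly isomorphic rel $\mathbb{M}_R$ (both look like $\mathbb{M}_R\otimes\mathbb{M}_\omega$, or a finite version), so the only data to match is where $\mathbb{C}$ goes.

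Having fixed such $\alpha<\beta$ and $\theta\colon\mathbb{M}_{S_\alpha}\to\mathbb{M}_{S_\beta}$ with $\theta|_{\mathbb{M}_R}=\mathrm{id}$ and $\theta\circ\varphi_\alpha=\varphi_\beta$, I would build the desired automorphism $\Phi$ of $\mathbb{M}_{\omega_2}$ as follows. Write $\omega_2 = R\;\sqcup\;(S_\alpha\setminus R)\;\sqcup\;(S_\beta\setminus R)\;\sqcup\;T$, where $T$ is the rest. On the coordinates in $R$ and in $T$, let $\Phi$ be the identity. On the ``block'' $\mathbb{M}_{(S_\alpha\setminus R)\cup(S_\beta\setminus R)}$ (over the fixed base $\mathbb{M}_R$), I want $\Phi$ to interchange $\varphi_\alpha$ and $\varphi_\beta$. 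Using $\theta$ and its inverse $\theta^{-1}$ (both identity on $\mathbb{M}_R$), the map $\theta$ already carries $\varphi_\alpha$ to $\varphi_\beta$; composing with a suitable measure-preserving ``swap of the two half-blocks'' one obtains an \emph{involution} $\Psi$ of $\mathbb{M}_{S_\alpha\cup S_\beta}$ (identity on $\mathbb{M}_R$) with $\Psi\circ\varphi_\alpha=\varphi_\beta$ and $\Psi\circ\varphi_\beta=\varphi_\alpha$ — to make it genuinely an involution, replace $S_\alpha,S_\beta$ by isomorphic copies of $R\cup\omega$ and take $\Psi$ to be $\theta$ on the first copy, $\theta^{-1}$ on the second, after relabelling so the underlying coordinate sets are literally swapped; since $\theta^{-1}\circ\theta=\mathrm{id}$ this is an involution and it does swap the two homomorphisms. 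Finally extend $\Psi$ by the identity on $\mathbb{M}_{R\cup T}$ to get $\Phi\colon\mathbb{M}_{\omega_2}\to\mathbb{M}_{\omega_2}$; since $\mathbb{M}_{\omega_2}$ is the measure-algebra tensor product of $\mathbb{M}_{S_\alpha\cup S_\beta\setminus R}$ and $\mathbb{M}_{R\cup T}$, and $\Phi$ is a measure isomorphism on each factor fixing the common part, it extends uniquely to an automorphism of $\mathbb{M}_{\omega_2}$. By construction $\Phi\circ\varphi_\alpha=\varphi_\beta$ and $\Phi\circ\varphi_\beta=\varphi_\alpha$ on all of $\mathbb{C}$.

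\textbf{Main obstacle.} The routine parts are the reduction to countable supports and the $\Delta$-system step. The genuine difficulty is the counting step: one must phrase ``isomorphism type of $(\mathbb{M}_{S_\alpha},\varphi_\alpha)$ rel $\mathbb{M}_R$'' precisely enough that (i) there are provably only $\le\aleph_1$ of them under $\mathsf{GCH}$, and (ii) two $\varphi$'s of the same type really are conjugate by an isomorphism $\theta$ that is the \emph{identity} (not merely some automorphism) on the root algebra $\mathbb{M}_R$ — this rigidity on the root is what lets the local swaps glue into a global automorphism. Getting Maharam's theorem to produce such root-fixing isomorphisms, and checking that the final glued map is well-defined on $\mathbb{M}_{\omega_2}$ (i.e.\ respects the $\sigma$-algebra structure and the measure, not just the Boolean operations), is the technical heart of the argument.
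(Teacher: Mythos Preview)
Your approach is correct and genuinely different from the paper's. The paper does \emph{not} use supports, the $\Delta$-system lemma, or type-counting. Instead it fixes a generating independent family $\{C_n\}$ of $\mathbb{C}$, calls $\alpha,\beta$ \emph{symmetric} if $\varphi_\alpha(A)\wedge\varphi_\beta(B)=0\iff\varphi_\beta(A)\wedge\varphi_\alpha(B)=0$ for all basic clopens $A,B$, and then argues by contradiction: if no pair were symmetric, colouring $[\omega_2]^2$ by the witnessing configuration $(A,B,i)$ and applying Erd\H{o}s--Rado $\omega_2\to(\omega_1)^2_\omega$ (here is where $\mathsf{GCH}$ enters) yields an uncountable monochromatic set, which is then shown to contradict the ccc of $\mathbb{M}_{\omega_2}$. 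Given a symmetric pair, Sikorski's extension criterion produces a partial automorphism swapping $\varphi_\alpha$ and $\varphi_\beta$ on the generated subalgebra, and homogeneity of $\mathbb{M}_{\omega_2}$ (Fremlin, \emph{Measure Theory} 333C, 383E) extends it to a global automorphism.

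Your route is the more standard ``isomorphism-of-names'' argument and has the advantage that the automorphism is produced explicitly as a coordinate permutation of $2^{\omega_2}$; in particular your worry about root-fixing dissolves once you transport via \emph{coordinate} bijections $S_\alpha\setminus R\to\omega$, since then $\theta=\iota_\beta^{-1}\circ\iota_\alpha$ is itself induced by a bijection of coordinates and is literally the identity on $\mathbb{M}_R$. (One small gap to close: thin $I$ so that $|S_\alpha\setminus R|$ is constant before counting.) The paper's route, by contrast, never looks at the product structure of $2^{\omega_2}$ at all; it is purely Boolean-algebraic, trading the $\Delta$-system/pigeonhole step for a partition-calculus/ccc step and trading the explicit permutation for an appeal to the homogeneity of measure algebras.
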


\begin{proof} 
	First, fix an independent family $\{C_n\colon n\in\omega\} \subseteq \mathbb{C}$ generating $\mathbb{C}$. We say that $A \in \mathbb{C}$ is \emph{a chunk} if it is of the form \[ A = (C_{i_0} \land \dots \land C_{i_k} ) \land ( C^c_{j_0} \land \dots \land
	C^c_{j_l} ) \]
	for sequences $(i_n)_{n\leq k}$, $(j_n)_{n\leq l}$ of integers.


We say that $\alpha$ and $\beta$ are \emph{symmetric} if 
\[ \varphi_\alpha(A) \land \varphi_\beta(B) = 0 \iff \varphi_\beta(A) \land \varphi_\alpha(B) =0 \]
for each chunks $A$ and $B$.
\medskip

\textbf{Claim.} There are distinct $\alpha$, $\beta < \omega_2$ which are symmetric. 
\medskip

Suppose the opposite. Then for each $\alpha<\beta$ we can choose chunks $A$, $B$ and $i\in \{0,1\}$ such that
\[ i=0 \mbox{ and } \varphi_\alpha(A) \land \varphi_\beta(B) =0 \mbox{ and } \varphi_\beta(A) \land \varphi_\alpha(B) \ne 0 \]
or
\[ i=1 \mbox{ and } \varphi_\alpha(A) \land \varphi_\beta(B) \ne 0 \mbox{ and } \varphi_\beta(A) \land \varphi_\alpha(B) = 0. \]

Since $\mathbb{C}\times \mathbb{C} \times \{0,1\}$ is countable, by GCH and Erd\H{o}s-Rado theorem $\omega_2 = 2^{2^\omega} \rightarrow (\omega^+)^2_\omega$, we could find an uncountable set $\Lambda \subseteq \omega_2$ such that 
for each $\alpha, \beta \in \Lambda$ we have assigned the same configuration of chunks $A$, $B$ and $i\in \{0,1\}$. We may assume without loss of generality that $\Lambda = \omega_1$, $i=0$ and so for each $\alpha<\beta<\omega_1$ we have
\[ \varphi_\alpha(A) \land \varphi_\beta(B) = 0 \mbox{ and } \varphi_\beta(A) \land \varphi_\alpha(B) \ne 0. \]
Let $D = \bigvee_{\alpha<\omega_1} \varphi_\alpha(A)$. There is $\gamma<\omega_1$ such that $\bigvee_{\alpha<\gamma} \varphi_\alpha(A) = D$. Then $D \land \varphi_\gamma(B) = 0$. Let $\beta>\gamma$. Then $ \varphi_\beta(A) \land \varphi_\gamma(B)  \ne 0$
but $\varphi_\beta(A) \leq D$, a contradiction.
\bigskip

So, let $\alpha<\beta<\omega_2$ be symmetric. Define $\Phi$ on $\mathcal{A} = \{\varphi_i(C_n)\colon n\in \omega, i \in \{\alpha,\beta\}\}$ by setting $\Phi(\varphi_\alpha(C_n)) =  \varphi_\beta(C_n)$ and $\Phi(\varphi_\beta(C_n)) =
\varphi_\alpha(C_n)$. Then $\Phi$ can be extended to an automorphism of the Boolean algebra generated by $\mathcal{A}$. Indeed, by Sikorski's Extension Criterion (see \cite[Proposition 5.6]{BAhandbook}) it is enough to check that 
\[ \varphi_\alpha(A) \land \varphi_\beta(B) = 0  \iff (\Phi(\varphi_\alpha(A)) \land \Phi(\varphi_{\beta}(B)) = 0. \]
	But since $\Phi\circ \varphi_\alpha = \varphi_\beta$ and $\Phi\circ \varphi_\beta = \varphi_\alpha$ the above condition translates to symmetry of $\alpha$ and $\beta$.

	Then $\Phi$ can be extended to an automorphism of the whole $\mathbb{M}_{\omega_2}$ (see \cite[proof of (a) in Theorem 383E]{Fremlin-MT3}, \cite[Theorem 333C(b)]{Fremlin-MT3}).
\end{proof}

As a corollary we will reprove one of the famous results of Kunen. We will show that in the random model there are no towers of size $\omega_2$. In fact, the Kunen's original theorem is about the Cohen model. Kunen's proof seems to belong to the oral
part of the set theoretic tradition and we are not aware of any literature containing it (apart perhaps of \cite{Kunen-Phd}). 

\begin{thm}[Kunen]\label{towers} Let $V$ be the model of $GCH$ and let $G$ be the $\mathbb{M}_{\omega_2}$-generic. Then, in $V[G]$, there is no well-ordered chain of size $\omega_2$ in $\mathcal{P}(\omega)/Fin$.
\end{thm}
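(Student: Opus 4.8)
The plan is to derive a contradiction from the existence of a strictly decreasing (mod finite) $\omega_2$-sequence $(T_\alpha)_{\alpha<\omega_2}$ in $\mathcal{P}(\omega)/Fin$ in $V[G]$, using Theorem~\ref{Compatible} to produce an automorphism of $\mathbb{M}_{\omega_2}$ that swaps two of the names coming from this tower. First I would note that each tail $T_\alpha$ of the tower, viewed as a subset of $\omega$ in $V[G]$, is an $\mathbb{M}_{\omega_2}$-name, and since $\mathbb{M}_{\omega_2}$ is ccc, each $T_\alpha$ is actually added by a countable subfamily of coordinates, i.e. there is a countable $E_\alpha\subseteq\omega_2$ such that $T_\alpha$ (as a name) lives in $\mathbb{M}_{E_\alpha}\cong\mathbb{M}$. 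Rather than work with the sets $T_\alpha$ directly it is cleaner to encode the \emph{initial segment} of the tower below $\alpha$ by a single real: for each $\alpha$ fix, in $V[G]$, a real $z_\alpha\in 2^\omega$ (equivalently a subset of $\omega$) that codes enough of the tower to witness that $T_\alpha$ is $\subseteq^*$-below $T_\xi$ for all $\xi<\alpha$ — for instance $z_\alpha$ could enumerate, for a fixed cofinal $\omega$-sequence into $\alpha$ plus a pairing function, the finite ``errors'' $T_\alpha\setminus T_\xi$. The key structural point we want is: from $z_\beta$ (for $\beta$ large) one can recover, for every $\alpha<\beta$ in a prescribed countable set, the set $T_\alpha$ mod finite, hence in particular a pair of indices $\alpha_0<\alpha_1<\beta$ with $T_{\alpha_1}\subsetneq^* T_{\alpha_0}$ properly.

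Next I would turn these reals into homomorphisms. Each $z_\alpha$ gives a homomorphism $\varphi_\alpha\colon\mathbb{C}\to\mathbb{M}_{\omega_2}$ by $\varphi_\alpha(C)=\llbracket z_\alpha\in C\rrbracket$ (the name form of the real $z_\alpha$), and by the ccc observation above the range of $\varphi_\alpha$ is contained in a copy $\mathbb{M}_{E_\alpha}$ of $\mathbb{M}$ on a countable coordinate set. Applying Theorem~\ref{Compatible} to the family $(\varphi_\alpha)_{\alpha<\omega_2}$ we obtain $\alpha<\beta<\omega_2$ and an automorphism $\Phi$ of $\mathbb{M}_{\omega_2}$ with $\Phi\circ\varphi_\alpha=\varphi_\beta$ and $\Phi\circ\varphi_\beta=\varphi_\alpha$ on all of $\mathbb{C}$. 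An automorphism of the forcing $\mathbb{M}_{\omega_2}$ induces an automorphism of the generic extension (it sends a generic filter $G$ to another generic filter $\Phi[G]$, and $V[G]=V[\Phi[G]]$), under which the name $\dot\varphi_\alpha$ is carried to $\dot\varphi_\beta$ and vice versa; equivalently there is in $V[G]$ a forcing isomorphism realising this. The upshot I want to extract is that the statement ``$z_\alpha$ and $z_\beta$ stand in such-and-such a relation'' is symmetric under $\Phi$, because $\Phi$ literally interchanges the two names while fixing all ground model objects (in particular fixing the Cantor algebra, the pairing functions, and the chosen cofinal sequences used to build the $z$'s, since those are in $V$).

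The contradiction then comes from the asymmetry inherent in a tower. Choose the coding $z_\alpha$ carefully enough that from the pair $(z_\alpha,z_\beta)$ — with $\alpha<\beta$ and both indices in the fixed countable set that $z_\beta$ ``knows about'' — one reads off that $T_\beta\subseteq^* T_\alpha$ but $T_\alpha\not\subseteq^* T_\beta$; this is a Borel (indeed arithmetic) relation $R(x,y)$ in the ground model, and it holds of $(z_\alpha,z_\beta)$ in $V[G]$. Applying $\Phi$, which swaps $z_\alpha$ and $z_\beta$ and preserves all ground-model-definable relations and truth in $V[G]$, we get that $R(z_\beta,z_\alpha)$ also holds, i.e.\ $T_\alpha\subseteq^* T_\beta$ and $T_\beta\not\subseteq^* T_\alpha$ — contradicting what we just said. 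Hence no such $\omega_2$-tower exists.

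The main obstacle, and where the real care is needed, is the bookkeeping in the second paragraph: arranging the coding reals $z_\alpha$ so that (i) they depend only on ground-model gadgets together with the tower, so that $\Phi$ genuinely fixes everything except the swapped pair, and (ii) the pair $(z_\alpha,z_\beta)$ returned by Theorem~\ref{Compatible} is guaranteed to have $\alpha<\beta$ with $\beta$'s code ``reaching back'' to $\alpha$ — since Theorem~\ref{Compatible} only promises \emph{some} pair, one must design the $z_\alpha$'s so that \emph{every} pair of indices exhibits the tower asymmetry, e.g.\ by letting $z_\alpha$ code the entire restriction of the tower to $\alpha$ (possible because $\alpha<\omega_2$ is, under $\mathsf{GCH}$ in $V$ hence in $V[G]$, a set of size at most $\omega_1$, which can be coded by an element of $2^{\omega_1}$; one then works with homomorphisms $\mathbb{C}\to\mathbb{M}_{\omega_2}$ built from countably much information at a time and takes a further $\Delta$-system / reflection step, or alternatively codes only a tail of length $\omega_1$ and uses that $\mathrm{cf}(\omega_2)>\omega_1$). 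Making this last point clean — ensuring the swapped names really do encode a genuine tower-comparison and not a vacuous one — is the crux; once it is set up, the symmetry-breaking argument is immediate.
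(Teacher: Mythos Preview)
You explicitly consider and then reject working with the tower elements $T_\alpha$ directly, opting instead for coding reals $z_\alpha$ that encode initial segments of the tower. This is the wrong turn: the direct approach is both correct and much simpler, while your coding approach introduces a genuine gap that you yourself identify but do not close. For the swap argument to go through with your $z_\alpha$'s, each $z_\alpha$ would have to encode the entire tower below $\alpha$ --- an object of size up to $\omega_1$, not a single real --- so Theorem~\ref{Compatible}, which is about homomorphisms from the \emph{countable} Cantor algebra $\mathbb{C}$, does not apply. Your proposed repairs (a ``$\Delta$-system / reflection step'', or coding only an $\omega_1$-tail) are left as sketches and it is not clear how to make any of them work; in particular, any relation $R(z_\alpha,z_\beta)$ you define must be symmetric enough for the swap to say something, yet asymmetric enough to yield a contradiction, and with your coding the decoded content of $z_\alpha$ and $z_\beta$ is not symmetric in the relevant way.

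The paper simply takes $\varphi_\alpha\colon\mathbb{C}\to\mathbb{M}_{\omega_2}$ to be the homomorphism naming $T_\alpha$ itself, i.e.\ $\varphi_\alpha(C)=\llbracket T_\alpha\in C\rrbracket$. The relation $x\subseteq^* y$ is a ground-model-definable relation between two reals, so from $\Vdash\dot\varphi_\alpha\subseteq^*\dot\varphi_\beta$ and the swap automorphism $\Phi$ of Theorem~\ref{Compatible} one immediately gets $\Vdash\dot\varphi_\beta\subseteq^*\dot\varphi_\alpha$, contradicting strictness of the chain. The asymmetry you were trying to manufacture via elaborate coding is already present in the bare relation $\subseteq^*$ between the $T_\alpha$'s; there is nothing further to arrange.
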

\begin{proof} Suppose that there is a strictly $\subseteq^*$-increasing chain of size $\omega_2$ in $V[G]$. Then there is a sequence $(\varphi_\alpha)_{\alpha<\omega_2}$ of homomorphisms $\varphi_\alpha\colon \mathbb{C} \to \mathbb{M}_{\omega_2}$ such that
\[ \Vdash_{\mathbb{M}_{\omega_2}} (\dot{\varphi}_\alpha)_{\alpha<\omega_2} \mbox{ is a }\subseteq^*\mbox{-chain of subsets of }\omega. \]

We will work in $V$.  By Theorem \ref{Compatible} there are $\alpha<\beta<\omega_2$ and an automorphism $\Phi\colon \mathbb{M}_{\omega_2} \to \mathbb{M}_{\omega_2}$ such that $\Phi\circ \varphi_\alpha = \varphi_\beta$ and $\Phi\circ \varphi_\beta = \varphi_\alpha$.
 Then 
\[ \Phi[\mathbb{M}_{\omega_2}] \Vdash \Phi\circ \dot{\varphi}_\alpha \subseteq^* \Phi\circ \dot{\varphi}_\beta, \]
and so 
\[ \mathbb{M}_{\omega_2} = \Phi[\mathbb{M}_{\omega_2}] \Vdash \dot{\varphi}_\beta \subseteq^* \dot{\varphi}_\alpha, \]
which clearly leads to the contradiction with the assumption that the chain is strictly increasing.
\end{proof}

	
	
%

\begin{rem} Every $\mathbb{M}$-name for an ultrafilter on $\mathbb{M}$ itself can be seen as a homomorphism $\varphi\colon \mathbb{M} \to \mathbb{M}$. The family of all such homomorphisms with the composition is a semi-group. So, in $V[G]$, where $G$
	is an $\mathbb{M}$-generic, we have a semi-group structure on the Stone space of $\mathbb{M} \cap V$. It seems that this structure has no natural counterpart in the Stone space of $\mathbb{M}$ in the ground model. In the next section we will see that
	this structure with a little bit of ergodic theory may help us to prove some theorems about the Stone space of $\mathbb{M}\cap V$.
\end{rem}

\section{Measure Recognition Problems in forcing context}\label{mrp}

In this section we overview some of the properties used in the Measure Recognition Problems mentioned in the introduction. We will show that those properties have characterization in terms of forcing with measure algebras.

\subsection{Boolean algebras supporting measures}
The question about a characterization of Boolean algebras supporting measures has a long history. In case we ask this question in the context of $\sigma$-additive measures, it leads to the famous Maharam's Problem. The combinatorial charaterization for finitely
additive measures is much less mysterious; it was formulated  by Kelley in \cite{Kelley}. 

In \cite{Kamburelis} Kamburelis proved an elegant characterization of Boolean algebras supporting measures in the forcing terminology (Theorem \ref{Kamburelis} below). If we look at the names for ultrafilters as homomorphisms, the proof
almost trivializes provided we use a well-known fact from the theory of dynamical systems.

Let $(X, \Sigma, \mu)$ be a measure space. We say that $T\colon X \to X$ is a \emph{measure-preserving transformation} if $\mu(T[A]) = \mu(A)$ for each measurable $A$. 

A measure-preserving transformation $T$ is called \emph{ergodic} if whenever $A$ is invariant, i.e. $T^{-1}[A]=A$, then $\mu(A)=0$ or $\mu(A)=1$. Note that this is equivalent to saying that whenever $A, B$ are $\mu$-positive then  
 $\mu(T^{-n}[A] \cap B)>0$ for some $n$. It follows from the fact that $\bigcup_n T^{-n}[A]$ is invariant and so $\mu(\bigcup_n T^{-n}[A])=1$.

An example of an ergodic transformation of $([0,1], \mathrm{Bor}([0,1]), \lambda)$ is $T(x) = x + r$, where $r$ is irrational and $+$ is taken modulo $1$. We will use the fact that for each infinite $\kappa$ there is an ergodic transformation of
$([0,1]^\kappa, \mathrm{Bor}([0,1]^\kappa), \lambda_\kappa)$. 

\begin{prop}\label{ergodic} \cite[Section 1.4]{Choksi} Let $\kappa$ be an infinite cardinal number. There is a metric automorphism $\varphi\colon \mathbb{M}_\kappa \to \mathbb{M}_\kappa$ such that whenever $A, B \in \mathbb{M}^+_\kappa$, there is $n\in \omega$ such that $\varphi^n(A) \cap B \ne 0$.
\end{prop}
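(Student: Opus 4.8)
The plan is to reduce the statement to the classical existence of an ergodic measure-preserving transformation of the continuum and then transport it to $\mathbb{M}_\kappa$. First I would recall that, up to metric isomorphism, it suffices to work with a concrete homogeneous probability space whose measure algebra is $\mathbb{M}_\kappa$: by Maharam's theorem (Theorem \ref{Maharam}) the algebra $\mathbb{M}_\kappa$ is metrically isomorphic to the measure algebra of $([0,1]^\kappa,\mathrm{Bor}([0,1]^\kappa),\lambda_\kappa)$, since the latter is atomless of Maharam type $\kappa$. So it is enough to produce an ergodic measure-preserving transformation $T$ of $[0,1]^\kappa$ with product Lebesgue measure; passing to equivalence classes modulo null sets turns $T$ into a metric automorphism $\varphi$ of $\mathbb{M}_\kappa$.

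Next I would construct such a $T$. For $\kappa=\omega$ the standard example is a single irrational rotation on each coordinate, or more safely the skew/infinite-product constructions in Choksi's paper \cite{Choksi} which are designed exactly so that the countable (or arbitrary) product of the circle with rotation by an irrational is ergodic; one may also quote Section 1.4 of \cite{Choksi} verbatim. For general infinite $\kappa$ one writes $[0,1]^\kappa \cong ([0,1]^\omega)^\kappa$ (grouping coordinates into $\kappa$ blocks of size $\omega$) and applies the same ergodic transformation block by block in a way that still mixes across all blocks — again this is precisely the content of the cited reference, so I would simply invoke \cite[Section 1.4]{Choksi} for the existence of an ergodic $\lambda_\kappa$-preserving transformation $T$ of $[0,1]^\kappa$, rather than reproving it.

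Finally I would translate ergodicity into the combinatorial conclusion stated in the proposition. Let $\varphi\colon\mathbb{M}_\kappa\to\mathbb{M}_\kappa$ be the metric automorphism induced by $T$ (so $\varphi$ corresponds to $A\mapsto T^{-1}[A]$ on representatives). Given $A,B\in\mathbb{M}_\kappa^+$, the set $\bigvee_{n\in\omega}\varphi^n(A)$ is $\varphi$-invariant and has positive measure, hence by ergodicity it equals $1$; therefore it meets the positive element $B$, which forces $\varphi^n(A)\land B\neq 0$ for some $n$. This is the remark already made in the excerpt just before the proposition, so I would phrase this step as essentially one line. The only genuine obstacle is the construction of the ergodic transformation for uncountable $\kappa$ — the combinatorial consequence and the reduction via Maharam's theorem are routine — and I intend to dispatch that obstacle by citing \cite[Section 1.4]{Choksi} rather than carrying out the infinite-product ergodicity argument in detail.
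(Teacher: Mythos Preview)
Your proposal is correct and follows the same skeleton as the paper: realize $\mathbb{M}_\kappa$ via Maharam's theorem as the measure algebra of a concrete product space, exhibit an ergodic measure-preserving transformation there, and then read off the combinatorial conclusion from ergodicity (exactly the remark preceding the proposition). The only substantive difference is in how the ergodic transformation is obtained. Rather than invoking Choksi, the paper writes down an explicit construction that works uniformly in $\kappa$: it takes the two-sided Bernoulli shift on $([0,1]^\kappa)^{\mathbb{Z}}$, notes that this shift is mixing (hence ergodic), and then uses Maharam's theorem to identify the measure algebra of $([0,1]^\kappa)^{\mathbb{Z}}$ with $\mathbb{M}_\kappa$. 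This is short, self-contained, and avoids splitting into the cases $\kappa=\omega$ and $\kappa>\omega$.

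One small caution about your informal sketch for uncountable $\kappa$: ``apply the same ergodic transformation block by block'' is not, on its own, a construction of an ergodic map --- a product of ergodic transformations is in general \emph{not} ergodic (think of the diagonal irrational rotation on $\mathbb{T}\times\mathbb{T}$). You save yourself by deferring to the cited reference, which is legitimate; but the paper's Bernoulli-shift argument is precisely the clean device that produces a single ergodic automorphism on a space of the correct Maharam type without any delicate product-ergodicity issues.
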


\begin{proof} Let $X = [0,1]^\kappa$. Consider $X^\mathbb{Z}$ with the product measure $\mu$. Let $T\colon X^\mathbb{Z} \to X^\mathbb{Z}$ be the Bernoulli shift, i.e. $T(x)(n) = x(n+1)$. Then, for every measurable $A, B \subseteq X^\mathbb{Z}$ we
	have $\lim_{n\to \infty} \mu(T^{-n}[A] \cap B) = \mu(A)\cdot\mu(B) > 0$ (i.e. Bernoulli shift is mixing, see e.g. \cite[Chapter 3]{Shields}) and so $T$ is ergodic.
	
	Define a metric automorphism $\varphi$ of $\mathrm{Bor}(X^\mathbb{Z})/_{\mu = 0}$ by $\varphi(A) = T^{-1}[A]$ and notice that, by Maharam's theorem, $(\mathrm{Bor}(X^\mathbb{Z})/_{\mu = 0}, \mu)$ is, in fact, isomorphic to $(\mathbb{M}_\kappa,
	\lambda_\kappa)$ and so we may think that $\varphi$ is an automorphism of $\mathbb{M}_\kappa$. Clearly, $\varphi$ satisfies the desired property.
\end{proof}

\begin{thm}\label{Kamburelis}\cite[Proposition 3.7]{Kamburelis} Let $\mathbb{A}$ be a Boolean algebra. Then the following are equivalent
	\begin{itemize}
		\item $\mathbb{A}$ supports a (strictly positive) measure,
		\item there is $\kappa$ such that $1 \Vdash_{\mathbb{M}_\kappa} \mathbb{A}$ is $\sigma$-centered.
	\end{itemize}
\end{thm}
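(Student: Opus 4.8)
\textbf{Proof plan for Theorem \ref{Kamburelis}.}

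The plan is to prove both implications directly, using the homomorphism picture of names for ultrafilters together with the ergodic automorphism from Proposition \ref{ergodic}.

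For the easy direction, suppose there is $\kappa$ with $1 \Vdash_{\mathbb{M}_\kappa} \mathbb{A}$ is $\sigma$-centered. Force with $\mathbb{M}_\kappa$; in $V[G]$ we get a sequence $(\mathcal{U}_n)_{n\in\omega}$ of ultrafilters on $\mathbb{A}$ with $\mathbb{A}^+ = \bigcup_n \mathcal{U}_n$. Each $\mathcal{U}_n$ is, by the correspondence described in Section \ref{homomorphisms}, given by a homomorphism $\varphi_n\colon \mathbb{A}\to\mathbb{M}_\kappa$ (in $V$), and we may take $\nu_n = \lambda_\kappa\circ\varphi_n$ to be the induced measure on $\mathbb{A}$. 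Put $\mu = \sum_n \nu_n/2^{n+1}$. For any $A\in\mathbb{A}^+$, since $1\Vdash A\in\dot{\mathcal{U}}_n$ for some $n$, we have $\bigvee_n \varphi_n(A) = 1$ in $\mathbb{M}_\kappa$, hence $\varphi_n(A)\ne 0$ for some $n$, so $\nu_n(A)>0$ and thus $\mu(A)>0$. Therefore $\mu$ is strictly positive on $\mathbb{A}$.

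For the nontrivial direction, suppose $\mathbb{A}$ supports a strictly positive measure $\mu$. Let $K$ be the Stone space of $\mathbb{A}$ and $\hat\mu$ the Radon extension from the Preliminaries. By Maharam's theorem (Theorem \ref{Maharam}) $\mathrm{Bor}(K)/_{\hat\mu=0}$ is metrically isomorphic to a direct sum of measure algebras; absorbing the summands into a single homogeneous one if necessary, one finds a cardinal $\kappa$ and a metric homomorphism $\varphi\colon \mathbb{A}\to\mathbb{M}_\kappa$ with $\lambda_\kappa\circ\varphi = \mu$ (this uses, as in Proposition \ref{Maharam-conclusion}, that clopens of $K$ sit inside $\mathrm{Bor}(K)/_{\hat\mu=0}$, which embeds metrically into $\mathbb{M}_\kappa$). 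Now let $\psi\colon\mathbb{M}_\kappa\to\mathbb{M}_\kappa$ be the ergodic metric automorphism of Proposition \ref{ergodic}, so that for all $P,Q\in\mathbb{M}_\kappa^+$ there is $n$ with $\psi^n(P)\cap Q\ne 0$. For each $n\in\omega$ set $\varphi_n = \psi^n\circ\varphi\colon\mathbb{A}\to\mathbb{M}_\kappa$; each $\varphi_n$ is a homomorphism, hence by the correspondence of Section \ref{homomorphisms} it is a name $\dot{\varphi}_n$ for an ultrafilter on $\mathbb{A}$. The claim is that $1\Vdash_{\mathbb{M}_\kappa} \bigcup_n \dot{\varphi}_n = \mathbb{A}^+$, i.e. that the $\dot{\varphi}_n$ witness $\sigma$-centeredness. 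It suffices to show there is no condition $p\in\mathbb{M}_\kappa^+$ and $A\in\mathbb{A}^+$ with $p\Vdash A\notin\dot{\varphi}_n$ for all $n$; unpacking, this would mean $p\wedge\varphi_n(A) = 0$, i.e. $p\wedge\psi^n(\varphi(A)) = 0$ for every $n$. But $\varphi(A)\in\mathbb{M}_\kappa^+$ since $\mu$ is strictly positive, and $p\in\mathbb{M}_\kappa^+$, so ergodicity gives some $n$ with $\psi^n(\varphi(A))\cap p\ne 0$, a contradiction. Hence for every $p$ and every $A\in\mathbb{A}^+$ some extension of $p$ forces $A\in\dot{\varphi}_n$ for the corresponding $n$, i.e. $1\Vdash A\in\bigcup_n\dot{\varphi}_n$, and since this holds for all $A\in\mathbb{A}^+$ we get $1\Vdash\mathbb{A}$ is $\sigma$-centered.

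The main obstacle is the middle step in the nontrivial direction: extracting, from a strictly positive measure on $\mathbb{A}$, a genuine homomorphism $\varphi\colon\mathbb{A}\to\mathbb{M}_\kappa$ into a measure algebra that realizes $\mu$. If $\mu$ is separable this is exactly Proposition \ref{Maharam-conclusion}; in general one must pass through the Stone space, use the Radon extension together with the clopen-approximation property to see that $\mathrm{Clop}(K)$ embeds metrically into $\mathrm{Bor}(K)/_{\hat\mu=0}$, and then invoke Maharam's theorem to identify the latter with a (possibly large) $\mathbb{M}_\kappa$ — the direct-sum decomposition can be handled since a direct sum of measure algebras of types $\le\kappa$ embeds metrically into $\mathbb{M}_\kappa$ after rescaling, or one simply works with the direct sum and notes that the ergodic automorphism argument goes through componentwise. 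Once the homomorphism is in hand, the ergodic automorphism does all the remaining work, which is why casting names as homomorphisms makes the proof so short.
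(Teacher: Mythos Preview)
Your proof is correct and follows essentially the same route as the paper: embed $\mathbb{A}$ metrically into some $\mathbb{M}_\kappa$ via Maharam's theorem, compose with iterates of the ergodic automorphism from Proposition \ref{ergodic} to produce countably many names for ultrafilters, and use ergodicity in a density argument to see they cover $\mathbb{A}^+$; for the converse, average the induced measures of the witnessing names. The paper simply cites Proposition \ref{Maharam-conclusion} (stated only for separable $\mu$) for the embedding into $\mathbb{M}_\kappa$ without further comment, so your extra paragraph on handling the non-separable case via the Stone space and the direct-sum decomposition is more careful than the paper itself, but the underlying argument is the same.
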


\begin{proof}
	Assume that $\mathbb{A}$ supports a measure $\mu$. Then, by Proposition \ref{Maharam-conclusion}, there is a metric homomorphism $\psi\colon \mathbb{A} \to \mathbb{M}_\kappa$ for some $\kappa$. 	   Let $\varphi\colon
	\mathbb{M}_\kappa \to \mathbb{M}_\kappa$ be as in Proposition \ref{ergodic}. Denote $\rho_n = \varphi^n \circ \psi$. We claim that \[ 1 \Vdash_{\mathbb{M}_\kappa}  \mathbb{A}^+ = \bigcup_n \dot{\rho}_n. \]
	Indeed, let $A\in \mathbb{A}^+$ and let $p\in \mathbb{M}^+_\kappa$. There is $n$ such that $q = (\varphi^n( \psi(A) ) \cap p) \ne 0$. But then $q \Vdash A \in \dot{\rho_n}$. By the density argument we are done.

	Now suppose that $1 \Vdash_{\mathbb{M}_\kappa} \mathbb{A}$ is $\sigma$-centered. Then there is a family $\{\dot{\rho}_n \colon n\in \omega\}$ of $\mathbb{M}_\kappa$-names for ultrafilters on $\mathbb{A}$ 
	such that \[ 1\Vdash_{\mathbb{M}_\kappa} \mathbb{A}^+ = \bigcup_n \dot{\rho}_n. \]
	For every $n$ let $\mu_n = \lambda \circ \rho_n$. Then $\mu = \sum_n \mu_n/2^n$ is strictly positive on $\mathbb{A}$.
	Otherwise, there is $A\in \mathbb{A}^+$ such that $\mu_n(A)=0$ for each $n$. Hence, $\rho_n(A) = 0$ for every $n$ and so for each $n$ we would have $1 \Vdash_{\mathbb{M}_\kappa} A\notin \dot{\rho}_n$, a contradiction.
\end{proof}

As a corollary we get a simple proof of the folklore fact saying that the measure algebra $\mathbb{M}$ becomes $\sigma$-centered in the forcing extension by $\mathbb{M}$. This is formulated e.g. in \cite[Proposition
3.2.11]{Bartoszynski}
without a (direct) proof. We show that in fact a dense subset of the Stone space of the 'old' $\mathbb{M}$ in the forcing extension may be induced by a single object.

\begin{prop} $\Vdash_\mathbb{M} \mathbb{M} \mbox{ is }\sigma\mbox{-centered}$.
\end{prop}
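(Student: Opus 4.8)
The plan is to apply Theorem \ref{Kamburelis} with $\mathbb{A} = \mathbb{M}$, but in fact we want the stronger conclusion stated in the remark preceding the proposition: that a dense subset of $\mathrm{St}(\mathbb{M}\cap V)$ in the extension is induced by a single homomorphism. So first I would recall that $\mathbb{M}$ supports a measure, namely its own standard measure $\lambda$, which is strictly positive by definition of $\mathbb{M}$. By Maharam's theorem (Theorem \ref{Maharam}), $(\mathbb{M},\lambda)$ is already metrically isomorphic to itself via the identity, so we may take $\psi = \mathrm{id}\colon \mathbb{M}\to\mathbb{M}$ in the role played by the metric homomorphism in the proof of Theorem \ref{Kamburelis} (with $\kappa = \omega$).

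Next I would invoke Proposition \ref{ergodic} with $\kappa = \omega$ to obtain a metric automorphism $\varphi\colon \mathbb{M}\to\mathbb{M}$ such that for all $A,B\in\mathbb{M}^+$ there is $n\in\omega$ with $\varphi^n(A)\cap B \ne 0$. Set $\rho_n = \varphi^n$ (composing with the identity $\psi$ changes nothing). Each $\rho_n$ is a Boolean homomorphism $\mathbb{M}\to\mathbb{M}$, hence by the correspondence of Section \ref{homomorphisms} it is an $\mathbb{M}$-name $\dot\rho_n$ for an ultrafilter on $\mathbb{M}\cap V$. I then claim
\[ 1 \Vdash_\mathbb{M} \mathbb{M}^+ = \bigcup_{n} \dot\rho_n, \]
which is exactly the statement that $\mathbb{M}$ is $\sigma$-centered in $V[G]$. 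To verify the claim, fix $A\in\mathbb{M}^+$ and a condition $p\in\mathbb{M}^+$; by the ergodicity property there is $n$ with $q = \varphi^n(A)\cap p \ne 0$, and then $q \Vdash A\in\dot\rho_n$ since $\varphi^n(A) = \rho_n(A) = \llbracket A\in\dot\rho_n\rrbracket$. A density argument finishes it. One should also note that the single object mentioned in the remark is the automorphism $\varphi$ itself, from whose iterates the whole dense family $\{\dot\rho_n\}$ is generated.

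There is essentially no obstacle here beyond bookkeeping: the proposition is a direct specialization of Theorem \ref{Kamburelis}, and the only thing to be careful about is that we want $\kappa = \omega$ throughout, which is legitimate precisely because $\mathbb{M}$ already carries a strictly positive separable measure of countable Maharam type, so Proposition \ref{Maharam-conclusion} embeds it into $\mathbb{M}$ itself and Proposition \ref{ergodic} supplies an ergodic automorphism of $\mathbb{M}$ (not of some larger $\mathbb{M}_\kappa$). The mildly delicate point — more of a remark than a difficulty — is to observe that this $\sigma$-centeredness is witnessed by countably many homomorphisms arising from the single ergodic automorphism, rather than by an unstructured countable family, which is the extra content over the bare application of Theorem \ref{Kamburelis}.
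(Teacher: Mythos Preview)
Your proposal is correct and follows exactly the paper's approach: the paper's proof simply says it is analogous to the first part of the proof of Theorem~\ref{Kamburelis}, with $\psi$ taken to be the identity so that Maharam's theorem is not needed. Your observation that the dense family is generated by iterates of a single ergodic automorphism is precisely the ``single object'' remark preceding the proposition.
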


\begin{proof}
	The proof is analogous to the first part of the proof of Theorem \ref{Kamburelis}. In this case we do not need Maharam's theorem; $\psi$ is just the identity.
\end{proof}

\subsection{Boolean algebras carrying atomless measures} 

The following fact is a generalization of Example \ref{old}.

\begin{prop}\label{purely-atomic} Let $\varphi\colon \mathbb{A} \to \mathbb{M}_\kappa$. Assume that the measure $\mu = \lambda_\kappa \circ \varphi$ is purely atomic, i.e. $\mu = \sum_n a_n \delta_{\mathcal{V}_n}$ for some sequence $(a_n)$ of
	positive real numbers such that $\sum_{n} a_n = 1$, and a sequence $(\mathcal{V}_n)$ of ultrafilters
on $\mathbb{A}$. Then, $1 \Vdash_{\mathbb{M}_\kappa} \exists n\  \dot{\varphi} = \mathcal{V}_n$. \end{prop}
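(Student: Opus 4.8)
The statement is that if the induced measure $\mu = \lambda_\kappa \circ \varphi$ is purely atomic, concentrated on the ultrafilters $\mathcal{V}_n$ with weights $a_n$, then the generic filter forces $\dot\varphi$ to equal one of the $\mathcal{V}_n$. The plan is to work in $V[G]$ for an $\mathbb{M}_\kappa$-generic $G$ and show that the generic ultrafilter $\mathcal{U} = \dot\varphi[G] = \{A \in \mathbb{A} : \varphi(A) \in G\}$ must coincide with some $\mathcal{V}_n$; equivalently, that the set of conditions forcing $\dot\varphi = \mathcal{V}_n$ for some $n$ is dense. The key point is that $\varphi$ being metric onto $\lambda_\kappa$ means $\lambda_\kappa(\varphi(A)) = \mu(A) = \sum_n a_n \mathbbm{1}[A \in \mathcal{V}_n]$.

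\textbf{Key steps.} First I would fix $\varepsilon > 0$ and $p \in \mathbb{M}_\kappa^+$; let $N$ be large enough that $\sum_{n \geq N} a_n < \lambda_\kappa(p)$ — actually better: choose $N$ with $\sum_{n < N} a_n > 1 - \lambda_\kappa(p)$, so that $p \wedge \bigvee_{n<N}(\text{``}\dot\varphi = \mathcal{V}_n\text{''})$ is nonzero for at least one $n < N$. To make this precise, for each $n$ let $E_n = \bigwedge_{A \in \mathcal{V}_n} \varphi(A)$, interpreted as an element of $\mathbb{M}_\kappa$ — the ``event that $\dot\varphi$ refines $\mathcal{V}_n$'', i.e. $\{A : \varphi(A) \in G\} \supseteq \mathcal{V}_n$, which since $\mathcal{V}_n$ is an ultrafilter is the same as $\dot\varphi[G] = \mathcal{V}_n$. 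The crucial computation is that $\lambda_\kappa(E_n) \geq a_n$: indeed for any finite $A_1, \dots, A_k \in \mathcal{V}_n$ their meet $A = A_1 \wedge \dots \wedge A_k$ lies in $\mathcal{V}_n$, so $\lambda_\kappa(\varphi(A)) = \mu(A) \geq a_n$ (the point mass at $\mathcal{V}_n$ is contained in $\mu|_A$), hence every finite subintersection of $\{\varphi(A) : A \in \mathcal{V}_n\}$ has measure $\geq a_n$; by countable additivity of $\lambda_\kappa$ and the fact that $\mathbb{M}_\kappa$ is a measure algebra, the infimum $E_n$ also has $\lambda_\kappa(E_n) \geq a_n$. (Here one may pass to a countable subfamily of $\mathcal{V}_n$ generating the same intersection, since $\mathbb{M}_\kappa$ satisfies ccc and any decreasing net stabilizes along a countable cofinal subnet in measure.)

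Next I would check the $E_n$ are essentially disjoint: if $m \neq n$, pick $A \in \mathcal{V}_n \setminus \mathcal{V}_m$, so $A^c \in \mathcal{V}_m$, and then $E_n \leq \varphi(A)$ while $E_m \leq \varphi(A^c) = \varphi(A)^c$, giving $E_n \wedge E_m = 0$. Therefore $\lambda_\kappa\big(\bigvee_n E_n\big) = \sum_n \lambda_\kappa(E_n) \geq \sum_n a_n = 1$, so $\bigvee_n E_n = 1$ in $\mathbb{M}_\kappa$. This means $\{E_n : n \in \omega\}$ is a partition of unity, so it is predense, and any generic $G$ meets it: there is $n$ with $E_n \in G$, i.e. $1 \Vdash \exists n\ \dot\varphi = \mathcal{V}_n$. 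Finally I would note $E_n \Vdash \dot\varphi = \mathcal{V}_n$: below $E_n$ we have $\varphi(A) \in G$ for all $A \in \mathcal{V}_n$, so $\mathcal{V}_n \subseteq \dot\varphi[G]$, and since both are ultrafilters on $\mathbb{A}$ they are equal.

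\textbf{Main obstacle.} The one step requiring care is the passage from ``every finite subintersection of $\{\varphi(A) : A \in \mathcal{V}_n\}$ has measure $\geq a_n$'' to ``$E_n := \inf_{A \in \mathcal{V}_n}\varphi(A)$ exists in $\mathbb{M}_\kappa$ and has measure $\geq a_n$''. Since $\mathcal{V}_n$ may be uncountable, $E_n$ is an infinite infimum, which exists because $\mathbb{M}_\kappa$ is complete; the measure estimate then follows from the fact that in a measure algebra every such infimum is realized (up to null sets) along a countable subsequence — concretely, by ccc one finds $A_1, A_2, \dots \in \mathcal{V}_n$ with $\varphi(A_1) \geq \varphi(A_1 \wedge A_2) \geq \cdots$ decreasing to $E_n$ in measure, each term $\geq a_n$, hence $\lambda_\kappa(E_n) = \lim_k \lambda_\kappa(\varphi(A_1 \wedge \cdots \wedge A_k)) \geq a_n$ by continuity from above of $\lambda_\kappa$. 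Everything else is bookkeeping with the correspondence between homomorphisms and names.
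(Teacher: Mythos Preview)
Your proof is correct and follows essentially the same route as the paper: define $E_n=\bigwedge\{\varphi(A):A\in\mathcal V_n\}$ (the paper calls these $p_n$), observe that $E_n\Vdash\dot\varphi=\mathcal V_n$, show the $E_n$ are pairwise disjoint via a separating element of the ultrafilters, and use ccc plus continuity of $\lambda_\kappa$ to get $\lambda_\kappa(E_n)\geq a_n$, whence $\bigvee_n E_n=1$. The only difference is cosmetic: your ``Plan'' paragraph sketches a density/$\varepsilon$ approach before you abandon it for the cleaner partition-of-unity argument, which is exactly what the paper does directly.
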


\begin{proof} For $n\in \omega$ define $p_n = \bigwedge\{ \varphi(V)\colon V\in \mathcal{V}_n\}$. It is plain to check that $p_n \Vdash \dot{\varphi} = \mathcal{V}_n$. Notice also that $(p_n)$ is an antichain. If $n\ne m$ then there is $V\in \mathcal{V}_n \setminus \mathcal{V}_m$ and so $p_n \leq \varphi(V)$ and $p_m \leq \varphi(V)^c$. 

	To finish the proof we will show that $(p_n)$ is a maximal antichain (then the conclusion will follow from the standard density argument). First, since $\mathbb{M}_\kappa$ is ccc and each $\mathcal{V}_n$ is
	closed under intersections, for every $n$ there is a
	decreasing sequence $(V_k)$ in $\mathcal{V}_n$ such that $p_n = \bigwedge_k \varphi(V_k)$. So, $\lambda_\kappa(p_n)\geq a_n$, by the continuity of $\lambda_\kappa$. Thus, $\sum_n \lambda_\kappa(p_n) \geq \sum_n a_n = 1$ and so $(p_n)$ is
	maximal. 
\end{proof}

In particular, if the measure induced by a name $\dot{\varphi}$ is purely atomic, then $1 \Vdash_{\mathbb{M}_\kappa} \dot{\varphi}$ is in the ground model. At this point, we may formulate an easy corollary. Recall that a Boolean algebra is
\emph{superatomic} if each of its subalgebras contains an atom, equivalently, if its Stone space is scattered, see Introduction.

\begin{cor}\label{super} Let $\mathbb{A}$ be a Boolean algebra. Then $\mathbb{A}$ is superatomic if and only if $1\Vdash_{\mathbb{M}_\kappa} \dot{\mathrm{St}}(\mathbb{A}) = \mathrm{St}(\mathbb{A})$ for every $\kappa$.
\end{cor}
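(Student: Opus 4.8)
First I would recall the theorem of Pełczyński--Semadeni cited in the introduction: a Boolean algebra $\mathbb{A}$ carries an atomless measure if and only if it is not superatomic. Thus the statement I must prove reads: $\mathbb{A}$ carries no atomless measure if and only if $1 \Vdash_{\mathbb{M}_\kappa} \dot{\mathrm{St}}(\mathbb{A}) = \mathrm{St}(\mathbb{A})$ for every $\kappa$. So the proof naturally splits into two implications, and the bridge between ``new points in the Stone space'' and ``atomless measures'' is exactly the correspondence between $\mathbb{M}_\kappa$-names for ultrafilters and homomorphisms $\varphi \colon \mathbb{A} \to \mathbb{M}_\kappa$ set up in Section~\ref{homomorphisms}, together with Proposition~\ref{purely-atomic}.

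\textbf{($\Leftarrow$, contrapositive).} Suppose $\mathbb{A}$ is not superatomic. Then $\mathbb{A}$ carries an atomless measure $\mu$. By Proposition~\ref{Maharam-conclusion} (after discarding the non-separable part, or directly using Maharam's theorem~\ref{Maharam}) there is some $\kappa$ and a metric homomorphism $\varphi \colon (\mathbb{A}, \mu) \to (\mathbb{M}_\kappa, \lambda_\kappa)$, so that $\mu = \lambda_\kappa \circ \varphi$ is the induced measure. I would then argue, as in the final paragraph of Example~\ref{cantor} but for an arbitrary Boolean algebra: since $\mu$ is atomless, for every ultrafilter $\mathcal{V} \in \mathrm{St}(\mathbb{A})$ there is $A \in \mathbb{A}$ with $A \notin \mathcal{V}$ and $\mu(A) > 0$; then $\varphi(A) \ne 0$ and $\varphi(A) \Vdash A \in \dot{\varphi}$, hence $\varphi(A) \Vdash \dot{\varphi} \ne \mathcal{V}$. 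Since $\mathcal{V}$ was arbitrary, no ground-model ultrafilter is forced by $1$ to equal $\dot{\varphi}$; as $\mathbb{M}_\kappa$ is nontrivial this shows $1 \not\Vdash_{\mathbb{M}_\kappa} \dot{\varphi} \in \mathrm{St}(\mathbb{A}) \cap V$, i.e. $1 \not\Vdash_{\mathbb{M}_\kappa} \dot{\mathrm{St}}(\mathbb{A}) = \mathrm{St}(\mathbb{A})$ for this $\kappa$, which is the negation of the right-hand side.

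\textbf{($\Rightarrow$).} Suppose $\mathbb{A}$ is superatomic and fix any $\kappa$. Let $\dot{\mathcal{U}}$ be any $\mathbb{M}_\kappa$-name for an ultrafilter on $\mathbb{A}$; by the discussion in Section~\ref{homomorphisms} we may assume $\dot{\mathcal{U}} = \dot{\varphi}$ for a homomorphism $\varphi \colon \mathbb{A} \to \mathbb{M}_\kappa$, and let $\mu = \lambda_\kappa \circ \varphi$ be the induced measure. Since $\mathbb{A}$ is superatomic it carries no atomless measure (Pełczyński--Semadeni), and in fact every measure on a superatomic Boolean algebra is purely atomic --- I would justify this by noting that the Stone space is scattered, so $\hat{\mu}$ concentrated on its (countably many, by ccc-ness of $\hat\mu$) atoms gives $\mu = \sum_n a_n \delta_{\mathcal{V}_n}$ for ultrafilters $\mathcal{V}_n \in \mathrm{St}(\mathbb{A})$ and $\sum_n a_n = 1$. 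Now Proposition~\ref{purely-atomic} applies verbatim: $1 \Vdash_{\mathbb{M}_\kappa} \exists n\ \dot{\varphi} = \mathcal{V}_n$, so $1 \Vdash_{\mathbb{M}_\kappa} \dot{\varphi} \in \mathrm{St}(\mathbb{A}) \cap V$. Since $\dot{\mathcal{U}}$ was an arbitrary name for an ultrafilter, this gives $1 \Vdash_{\mathbb{M}_\kappa} \dot{\mathrm{St}}(\mathbb{A}) = \mathrm{St}(\mathbb{A})$, and as $\kappa$ was arbitrary we are done.

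\textbf{Main obstacle.} The routine implications are the two bullet-point reductions to Propositions~\ref{Maharam-conclusion} and~\ref{purely-atomic}; the only genuinely delicate point is the claim that every measure on a superatomic Boolean algebra is purely atomic. Here I would lean on the extension $\hat\mu$ to a Radon measure on the scattered Stone space $K$: a Radon measure on a scattered compact space has countable support (the set of atoms, which exhausts the mass since a scattered space has a dense set of isolated points relative to any closed subset, and a transfinite Cantor--Bendixson argument peels off all the mass), so $\hat\mu$ --- and hence $\mu$ --- is purely atomic of the required form. I expect this to be the one step needing care to state cleanly; everything else is assembly of results already proved in the excerpt.
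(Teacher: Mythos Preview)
Your approach is the same as the paper's: both directions rest on the Semadeni characterization (superatomic $\Longleftrightarrow$ every measure is purely atomic) together with Proposition~\ref{purely-atomic}, and your write-up is considerably more explicit than the paper's one-line proof.

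There is, however, a small but genuine logical gap in your $(\Leftarrow)$ direction. You show that for each ground-model ultrafilter $\mathcal{V}$ there exists a condition forcing $\dot\varphi \ne \mathcal{V}$, and from this you conclude $1 \not\Vdash \dot\varphi \in \mathrm{St}(\mathbb{A})\cap V$. That inference is invalid: think of a homomorphism that flips a fair coin between two fixed ground-model ultrafilters $\mathcal{V}_0,\mathcal{V}_1$ --- neither is forced by $1$, yet $1$ does force $\dot\varphi \in \{\mathcal{V}_0,\mathcal{V}_1\}\subseteq V$. What you need is the stronger statement that \emph{no} condition forces $\dot\varphi = \mathcal{V}$, for any $\mathcal{V}$. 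This follows immediately from atomlessness: $\llbracket \dot\varphi = \mathcal{V}\rrbracket = \bigwedge_{A\in\mathcal{V}} \varphi(A)$ has $\lambda_\kappa$-measure at most $\inf_{A\in\mathcal{V}}\mu(A)$, and for any $\varepsilon>0$ a finite partition of $1$ into pieces of $\mu$-measure $<\varepsilon$ contains exactly one member of $\mathcal{V}$, so this infimum is $0$. Hence $\llbracket \dot\varphi = \mathcal{V}\rrbracket = 0$ for every $\mathcal{V}$, and by fullness (or a ccc maximal-antichain argument) $1 \Vdash \dot\varphi \notin V$, which is stronger than what you needed.
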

\begin{proof} Use the fact that a Boolean algebra is superatomic if and only if all the measures it carries are purely atomic (see \cite{Semadeni}) and Proposition \ref{purely-atomic}.
\end{proof}

\begin{rem} 
	We get the above corollary 'for free' from Proposition \ref{purely-atomic}. In fact, it can be generalized to other forcing notions. Recall also Mazurkiewicz-Sierpi\'nski theorem which says that compact scattered spaces are classifiable up to
	homeomorphism by
	$\omega_1 \times \omega$ (see \cite{Mazurkiewicz} and \cite{Day}) and in this sense they are absolute objects.
\end{rem}

\subsection{Boolean algebras carrying only separable measures}

It is well known that every real added by forcing with $\mathbb{M}_\kappa$ can be added in fact added by forcing with $\mathbb{M}$. We will see that
we may characterize those names for ultrafilters which can be added by forcing with $\mathbb{M}$.



\begin{thm}\label{separable} Assume $\mathbb{A}$ is a Boolean algebra and let $\varphi\colon \mathbb{A} \to \mathbb{M}_\kappa$ be a homomorphism such that $\lambda_\kappa \circ \varphi$ is separable. Then there is $\mathbb{M}'\subseteq
\mathbb{M}_\kappa$ isomorphic to $\mathbb{M}$  such that $\varphi[\mathbb{A}] \subseteq \mathbb{M}'$. In particular $\dot{\varphi}$ is an $\mathbb{M}'$-name and so it can be added by forcing with $\mathbb{M}$. \end{thm}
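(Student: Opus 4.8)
The plan is to show that $\varphi[\mathbb{A}]$, although it may a priori live in a huge algebra, is concentrated on only countably many ``random coordinates''. For $S\subseteq\kappa$ let $\pi_S\colon 2^\kappa\to 2^S$ be the projection and let $\mathbb{M}_S\subseteq\mathbb{M}_\kappa=\mathrm{Bor}(2^\kappa)/_{\lambda_\kappa=0}$ be the subalgebra of classes of sets of the form $\pi_S^{-1}(B)$, $B\in\mathrm{Bor}(2^S)$. By Fubini's theorem $[B]_{\lambda_S}\mapsto[\pi_S^{-1}(B)]_{\lambda_\kappa}$ is a measure-preserving isomorphism of $\mathbb{M}_{|S|}$ onto $\mathbb{M}_S$, and $\mathbb{M}_S$ is a complete (regularly embedded) subalgebra of $\mathbb{M}_\kappa$. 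I would first prove that there is a \emph{countable} $S\subseteq\kappa$ with $\varphi[\mathbb{A}]\subseteq\mathbb{M}_S$. Given this, and assuming $\kappa$ infinite (otherwise $\mathbb{M}_\kappa$ is finite, $\mu$ is purely atomic, and $\dot\varphi$ is already realized in the ground model by Proposition~\ref{purely-atomic}), I enlarge $S$ to a countably infinite $S'\subseteq\kappa$ and set $\mathbb{M}'=\mathbb{M}_{S'}$. Then $\mathbb{M}'\cong\mathbb{M}_\omega=\mathbb{M}$ and $\varphi[\mathbb{A}]\subseteq\mathbb{M}_S\subseteq\mathbb{M}'$, which is what we want.

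To produce $S$, I would first note that $\varphi$ is distance-preserving from $(\mathbb{A},d_\mu)$ into $(\mathbb{M}_\kappa,d_{\lambda_\kappa})$: since $\varphi$ is a Boolean homomorphism, $d_{\lambda_\kappa}(\varphi(A),\varphi(B))=\lambda_\kappa(\varphi(A\triangle B))=\mu(A\triangle B)=d_\mu(A,B)$. Hence separability of $\mu$, i.e. of $(\mathbb{A},d_\mu)$, transfers to the image, and I can fix a countable $D\subseteq\varphi[\mathbb{A}]$ which is $d_{\lambda_\kappa}$-dense in $\varphi[\mathbb{A}]$. Each $d\in D$ is the class of a Borel subset of $2^\kappa$, and every Borel subset of $2^\kappa$ is (exactly) the $\pi_{S_d}$-preimage of a Borel subset of $2^{S_d}$ for some countable $S_d\subseteq\kappa$, as the Borel sets with this property form a $\sigma$-algebra containing all basic clopens. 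Let $S=\bigcup_{d\in D}S_d$, which is countable. Then $D\subseteq\mathbb{M}_S$, and since $\mathbb{M}_S$, being a complete subalgebra of $\mathbb{M}_\kappa$, is closed in the metric $d_{\lambda_\kappa}$, density of $D$ yields $\varphi[\mathbb{A}]\subseteq\overline{D}\subseteq\mathbb{M}_S$, as required.

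For the ``in particular'' part: since $\varphi(A)\in\mathbb{M}'$ for every $A\in\mathbb{A}$, the name $\dot\varphi=\{\langle A,\varphi(A)\rangle\colon A\in\mathbb{A}\}$ is literally an $\mathbb{M}'$-name for an ultrafilter on $\mathbb{A}$; as $\mathbb{M}'$ is a complete subalgebra of $\mathbb{M}_\kappa$, an $\mathbb{M}_\kappa$-generic $G$ restricts to an $\mathbb{M}'$-generic $G\cap\mathbb{M}'$ giving the same ultrafilter, and $\mathbb{M}'\cong\mathbb{M}$ then says this ultrafilter is added by a single random real. I expect the main obstacle to be the two standard-but-fiddly facts used in the second paragraph: that every Borel set in $2^\kappa$ depends on only countably many coordinates, and that $\mathbb{M}_S$ is metrically closed in $\mathbb{M}_\kappa$; once these are in place the argument is essentially bookkeeping. (One could instead run Maharam's theorem on the metric closure of $\varphi[\mathbb{A}]$, but that closure need not be atomless or homogeneous of type $\omega$, so one would still have to graft on an independent atomless factor inside $\mathbb{M}_\kappa$ to obtain an honest copy of $\mathbb{M}$; the coordinate approach sidesteps this.)
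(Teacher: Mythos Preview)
Your proof is correct and takes a genuinely different route from the paper. The paper argues abstractly: it observes that $\varphi[\mathbb{A}]$ is separable in $(\mathbb{M}_\kappa,d_{\lambda_\kappa})$, takes $\mathbb{M}'$ to be its metric closure, quotes Prikry to say that a metrically closed subalgebra of a measure algebra is complete as a Boolean algebra, and then invokes Maharam's theorem to identify $\mathbb{M}'$ with $\mathbb{M}$. To make the Maharam step go through cleanly the paper assumes ``for simplicity'' that $\mu$ is atomless; without that assumption the closure could have atoms and one would indeed need the extra ``grafting'' step you describe in your final parenthetical.

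Your coordinate approach trades the Maharam/Prikry machinery for the elementary fact that Borel subsets of $2^\kappa$ depend on countably many coordinates. This has two small advantages: it avoids the atomless simplification entirely (enlarging $S$ to a countably infinite $S'$ gives an honest copy of $\mathbb{M}$ regardless of what $\mu$ looks like), and it makes the complete embedding $\mathbb{M}'\subseteq\mathbb{M}_\kappa$ visibly the standard coordinate embedding, so the ``in particular'' clause about restricting generics is immediate. The paper's approach, on the other hand, is more intrinsic---it never looks at coordinates and would transfer verbatim to an abstract homogeneous measure algebra---at the cost of the atomless caveat and the citation to Prikry.
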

\begin{proof}	Assume $\mathbb{A}$ and $\varphi$ are as above. For simplicity we will assume that $1 \Vdash_{\mathbb{M}_\kappa} \dot{\varphi} \notin V$ and so that $\mu$ is atomless.
	
First, notice that $(\mathbb{A}/_{\mu=0}, \mu)$ and $(\varphi[\mathbb{A}], \lambda_\kappa)$ are metrically isomorphic. Indeed, $\varphi\colon \mathbb{A} \to \varphi[\mathbb{A}]$ is a metric epimorphism, whose kernel coincides with the family of
elements of $\mathbb{A}$ which are $\mu$-null. Hence, by the assumption on $\mu$, the measure $\lambda_\kappa$ is separable on $\varphi[\mathbb{A}]$ and so $(\varphi[\mathbb{A}], d_{\lambda_\kappa})$ is a separable metric space. Let $\mathbb{M}'$ be
the closure of $\varphi[\mathbb{A}]$ in $(\mathbb{M}_\kappa, d_{\lambda_\kappa})$. Then $\mathbb{M}'$ is a Boolean subalgebra of $\mathbb{M}_\kappa$ and $(\mathbb{M}',d_{\lambda_\kappa})$ is still separable. But $\mathbb{M}'$ is complete as a closed subspace of $\mathbb{M}_\kappa$ and so, by
	\cite[Proposition 2]{Prikry}, it is
	also complete as a Boolean algebra. Thus, by Maharam's theorem, it is isomorphic to $\mathbb{M}$. 
\end{proof}

\begin{prop}\label{hereditary} A Boolean algebra $\mathbb{A}$ supports a separable measure if and only if $\mathbb{A}$ can be embedded in $\mathbb{M}$.  
\end{prop}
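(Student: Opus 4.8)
The plan is to prove Proposition \ref{hereditary} by splitting it into its two implications and leaning on the machinery already developed, especially Proposition \ref{Maharam-conclusion} and Theorem \ref{separable}.

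\textbf{The forward direction.} Suppose $\mathbb{A}$ supports a separable measure $\mu$. Since $\mu$ is strictly positive and separable, Proposition \ref{Maharam-conclusion} hands me directly an injective metric homomorphism $\varphi\colon (\mathbb{A},\mu) \to (\mathbb{M},\lambda)$. An injective Boolean homomorphism is precisely an embedding of Boolean algebras, so this direction is essentially immediate once Proposition \ref{Maharam-conclusion} is in hand. (If one prefers to route through Theorem \ref{separable}, one could also take $\kappa = \omega$, observe $\mathbb{M}_\omega = \mathbb{M}$, and note that when $\varphi$ is injective the algebra $\varphi[\mathbb{A}] \subseteq \mathbb{M}'\cong\mathbb{M}$ witnesses the embedding; but invoking Proposition \ref{Maharam-conclusion} is cleaner.)

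\textbf{The reverse direction.} Suppose there is an embedding $\iota\colon \mathbb{A}\hookrightarrow \mathbb{M}$. Then I pull back the standard Haar measure: set $\mu = \lambda \circ \iota$, a finitely additive measure on $\mathbb{A}$. Because $\iota$ is injective and $\lambda$ is strictly positive on $\mathbb{M}$ (every nonzero element of $\mathbb{M}$ has positive measure), $\mu(A) = \lambda(\iota(A)) > 0$ for every $A\in\mathbb{A}^+$; hence $\mu$ is strictly positive on $\mathbb{A}$. It remains to see $\mu$ is separable, i.e. that $(\mathbb{A}, d_\mu)$ is a separable pseudometric space. But $\iota$ is an isometry from $(\mathbb{A}, d_\mu)$ into $(\mathbb{M}, d_\lambda)$, since $d_\mu(A,B) = \mu(A\triangle B) = \lambda(\iota(A)\triangle\iota(B)) = d_\lambda(\iota(A),\iota(B))$, and $(\mathbb{M}, d_\lambda)$ is separable (it has countable Maharam type). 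A subspace of a separable metric space is separable, so $(\mathbb{A}, d_\mu)$ is separable, and thus $\mu$ is a separable measure that $\mathbb{A}$ supports.

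\textbf{Main obstacle.} There is no serious obstacle here: both directions are short once Proposition \ref{Maharam-conclusion} and the basic fact that $(\mathbb{M}, d_\lambda)$ is separable are available. The only point requiring a word of care is checking that pulling back $\lambda$ along an embedding genuinely yields a \emph{strictly positive} and \emph{separable} measure — the strict positivity uses injectivity of $\iota$ together with strict positivity of $\lambda$ on $\mathbb{M}$, and separability uses that $\iota$ is an isometry into a separable space. Neither needs Maharam's theorem in this direction; the heavy lifting (Maharam's theorem) is confined to Proposition \ref{Maharam-conclusion}, which is invoked as a black box for the forward implication.
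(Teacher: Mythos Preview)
Your proof is correct and follows the same approach as the paper's: the forward direction is exactly Proposition \ref{Maharam-conclusion}, and the reverse direction pulls back $\lambda$ along the embedding and uses that subspaces of separable metric spaces are separable. You have simply spelled out the details (strict positivity from injectivity, separability via the isometry) that the paper leaves implicit in its one-line proof.
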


\begin{proof} The forward implication is just Proposition \ref{Maharam-conclusion} and the reverse implication follows from the fact that subspaces of separable metric spaces are separable.
\end{proof}

Recall that a Boolean algebra is measure separable if it carries only separable measure (see also Introduction).

\begin{thm} \label{measure-separable} Let $\mathbb{A}$ be a Boolean algebra. Then the following conditions are equivalent:
	\begin{itemize}
		\item $\mathbb{A}$ is measure separable,
		\item for every   $\kappa\geq \omega$ and every $\mathbb{M}_\kappa$-name $\dot{\varphi}$ for an ultrafilter on $\mathbb{A}$ there is a subalgebra $\mathbb{M}'\subseteq \mathbb{M}_\kappa$ isomorphic to $\mathbb{M}$ and such that $\dot{\varphi}$ is an $\mathbb{M}'$-name for an ultrafilter on $\mathbb{A}$. 
	\end{itemize}
\end{thm}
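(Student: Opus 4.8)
The plan is to prove the equivalence in two directions, using the machinery already developed.

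For the forward direction ($\mathbb{A}$ measure separable $\Rightarrow$ the name condition holds), suppose $\mathbb{A}$ is measure separable and let $\dot{\varphi}$ be an $\mathbb{M}_\kappa$-name for an ultrafilter on $\mathbb{A}$. By the correspondence established in Section \ref{homomorphisms}, we may assume $\dot{\varphi}$ arises from a homomorphism $\varphi\colon \mathbb{A} \to \mathbb{M}_\kappa$. Then $\mu = \lambda_\kappa \circ \varphi$ is a measure on $\mathbb{A}$, and by hypothesis it is separable. Now Theorem \ref{separable} applies directly: it yields a subalgebra $\mathbb{M}' \subseteq \mathbb{M}_\kappa$ isomorphic to $\mathbb{M}$ with $\varphi[\mathbb{A}] \subseteq \mathbb{M}'$, so $\dot{\varphi}$ is an $\mathbb{M}'$-name. (One should be slightly careful about the degenerate case where $\mu$ is not atomless, e.g. where $\varphi$ has countable range or $\dot{\varphi}$ names a ground-model ultrafilter; here one can pad the name into any copy of $\mathbb{M}$ inside $\mathbb{M}_\kappa$, or simply remark that the argument of Theorem \ref{separable} goes through by taking the closure of $\varphi[\mathbb{A}]$ and, if it fails to be atomless or non-trivial, enlarging it by a countably generated atomless complement.)

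For the reverse direction I would argue by contraposition: suppose $\mathbb{A}$ is \emph{not} measure separable, so $\mathbb{A}$ carries a non-separable measure $\nu$. First I would reduce to a strictly positive non-separable measure by passing to the quotient $\mathbb{A}/_{\nu = 0}$ — but since we need to produce a name for an ultrafilter on $\mathbb{A}$ itself, it is cleaner to keep working with $\nu$ on $\mathbb{A}$. Since $\nu$ is $\sigma$-additive on $\mathrm{Bor}(K)/_{\hat\nu=0}$ where $K = \mathrm{St}(\mathbb{A})$, Maharam's theorem (Theorem \ref{Maharam}) gives a metric isomorphism of some $\mu$-positive piece onto a measure algebra $\mathbb{M}_\lambda$ with $\lambda \geq \omega_1$ uncountable (this is exactly where non-separability is used: the Maharam type is uncountable). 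Composing, I obtain a metric homomorphism $\psi\colon \mathbb{A} \to \mathbb{M}_{\omega_1}$ (restricting the Maharam type to $\omega_1$, or directly taking $\kappa = \omega_1$) whose induced measure is $\nu$ up to the null ideal. Then $\psi[\mathbb{A}]$ is dense in $\mathbb{M}_{\omega_1}$ in the Fréchet–Nikodym metric, hence its closure is all of $\mathbb{M}_{\omega_1}$, which is not separable and therefore cannot be contained in any separably-generated subalgebra.

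The key step, and the main obstacle, is to show that this $\dot{\psi}$ is \emph{not} an $\mathbb{M}'$-name for any copy $\mathbb{M}' \subseteq \mathbb{M}_{\omega_1}$ of $\mathbb{M}$. The point is that if $\dot{\psi}$ were an $\mathbb{M}'$-name, then every truth value $\psi(A) = \llbracket A \in \dot{\psi}\rrbracket$ would lie in $\mathbb{M}'$, so $\psi[\mathbb{A}] \subseteq \mathbb{M}'$; but $\mathbb{M}'$ is a closed subalgebra (being a complete subalgebra of $\mathbb{M}_{\omega_1}$, or at least its metric closure in $\mathbb{M}_{\omega_1}$ is still separable and contained in a separable complete subalgebra), so the metric closure of $\psi[\mathbb{A}]$ would be separable — contradicting that it equals the non-separable $\mathbb{M}_{\omega_1}$. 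The subtlety to handle carefully is the meaning of ``$\dot{\varphi}$ is an $\mathbb{M}'$-name'': one must pin down that this forces all the relevant Boolean values into $\mathbb{M}'$ (up to the convention that a $\mathbb{P}$-name for a subset of $\mathbb{A}$ is equivalent to one all of whose Boolean values are computed in $\mathbb{P}$), and that a copy of $\mathbb{M}$ sitting inside $\mathbb{M}_{\omega_1}$ has separable metric closure. Once that bookkeeping is in place, the contradiction is immediate, and choosing $\kappa = \omega_1$ shows the second condition fails already at the level of $\mathbb{M}_{\omega_1}$-names, which is the sharpest possible.
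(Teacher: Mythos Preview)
Your argument is correct and follows the same route as the paper's proof: the forward direction is an immediate application of Theorem~\ref{separable}, and the reverse direction proceeds by taking a non-separable measure $\nu$, passing via Maharam's theorem to a homogeneous piece so that $\mathrm{Bor}(K)/_{\hat\nu=0}\cong\mathbb{M}_\kappa$ for some uncountable $\kappa$, and observing that the resulting $\varphi$ has image dense in $\mathbb{M}_\kappa$ (the paper even writes $\varphi[\mathbb{A}]=\mathbb{M}_\kappa$, which should be read as ``dense in''), hence cannot sit inside a separable complete subalgebra.

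One caveat: your claim that one can always arrange $\kappa=\omega_1$ (``restricting the Maharam type to $\omega_1$'') is not justified and is not needed. The Maharam decomposition of $\hat\nu$ may have all homogeneous pieces of type strictly above $\omega_1$, and there is no Boolean-homomorphic way to project $\mathbb{M}_\kappa$ onto $\mathbb{M}_{\omega_1}$ while keeping the image of $\mathbb{A}$ dense. The paper simply works with whatever uncountable $\kappa$ Maharam hands you, and so should you; drop the final sentence about $\omega_1$ being ``sharpest possible'' unless you supply a separate argument.
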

\begin{proof}
	If $\mathbb{A}$ is measure separable and $\varphi\colon \mathbb{A} \to \mathbb{M}_\kappa$ is a homomorphism, then $\lambda_\kappa \circ \varphi$ is separable and we can use Proposition \ref{separable}.

	Suppose now that $\mathbb{A}$ is not a measure separable Boolean algebra and let $\mu$ be a non-separable measure on $\mathbb{A}$. Let $K$ be the Stone space of $\mathbb{A}$. We may assume without loss of generality that
	$\mathrm{Bor}(K)/_{\hat{\mu}=0}$ is metrically isomorphic to $\mathbb{M}_\kappa$ for some
	$\kappa>\omega$. Assume that $\varphi' \colon \mathrm{Bor}(K)_{/\hat{\mu}=0} \to \mathbb{M}_\kappa$ witnesses this isomorphism. Let $\varphi\colon \mathbb{A} \to \mathbb{M}_\kappa$ be defined by $\varphi(A) = \varphi'([A]_{\hat{\mu}=0})$. Then
	$\varphi[\mathbb{A}] = \mathbb{M}_\kappa$ and so $\dot{\varphi}$ is not a $\mathbb{M}'$-name for any proper subalgebra of $\mathbb{M}_\kappa$.

	\end{proof}


The above theorem says that ultrafilters on a measure separable Boolean algebras which can be added by many random reals can be added also by a single random real. This does not mean that the Stone space of such Boolean algebra in the universe extended by
adding many random reals is the same as in the universe extended by a single random real. E.g. the Cantor algebra is measure separable, as countable Boolean algebra, but the cardinality of its Stone space clearly depends on how many random reals we add.

Notice that whenever $\mathbb{A}$ contains an uncountable independent
subfamily then it carries a measure which is non separable. Actually, under $\mathsf{MA}_{\omega_1}$ the converse implication also holds true (see \cite{Fremlin}) and so we have an immediate corollary.

\begin{cor} Assume $\mathsf{MA}_{\omega_1}$. If $\mathbb{A}$ is a Boolean algebra without an uncountable
	independent subfamily, then each ultrafilter on $\mathbb{A}$ added by random reals can be added by a single random real.
\end{cor}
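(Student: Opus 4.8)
The plan is to read off the corollary from Theorem \ref{measure-separable} together with Fremlin's solution of the Haydon Problem. First I would make precise what "added by random reals" should mean: an ultrafilter $\mathcal{U}$ on $\mathbb{A}$ lives in $V[G]$ for some $\kappa$ and some $\mathbb{M}_\kappa$-generic $G$ over $V$. Choose a name $\dot{\mathcal{U}}$ with $\dot{\mathcal{U}}_G = \mathcal{U}$ together with a condition $p \in G$ forcing $\dot{\mathcal{U}}$ to be an ultrafilter on $\mathbb{A}$. Since $\mathbb{M}_\kappa$ is homogeneous, $\mathbb{M}_\kappa \restriction p$ is (metrically) isomorphic to $\mathbb{M}_\kappa$ itself, so after this harmless replacement we may assume $p = 1$. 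Then, as explained at the beginning of Section \ref{homomorphisms}, putting $\varphi(A) = \llbracket A \in \dot{\mathcal{U}} \rrbracket$ defines a Boolean homomorphism $\varphi\colon \mathbb{A} \to \mathbb{M}_\kappa$ with $\Vdash_{\mathbb{M}_\kappa} \dot{\varphi} = \dot{\mathcal{U}}$.

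Next I would bring in the hypothesis. By Fremlin's theorem (\cite{Fremlin}), under $\mathsf{MA}_{\omega_1}$ every Boolean algebra without an uncountable independent subfamily is measure separable, i.e. carries only separable measures; this is precisely the situation assumed for $\mathbb{A}$. Hence Theorem \ref{measure-separable} applies to $\mathbb{A}$: for the name $\dot{\varphi}$ above there is a subalgebra $\mathbb{M}' \subseteq \mathbb{M}_\kappa$ isomorphic to $\mathbb{M}$ such that $\dot{\varphi}$ — and therefore $\dot{\mathcal{U}}$ — is an $\mathbb{M}'$-name for an ultrafilter on $\mathbb{A}$.

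Finally, I would note that $\mathbb{M}' \cong \mathbb{M}$ means forcing with $\mathbb{M}'$ is, up to isomorphism, exactly the forcing adding a single random real, and that $G \cap \mathbb{M}'$ is $\mathbb{M}'$-generic over $V$. Since $\dot{\mathcal{U}}$ is an $\mathbb{M}'$-name, $\mathcal{U} = \dot{\mathcal{U}}_{G \cap \mathbb{M}'} \in V[G \cap \mathbb{M}']$, so $\mathcal{U}$ is added by a single random real, which is what we wanted.

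I do not expect a genuine obstacle: essentially all the work is already packaged in Theorem \ref{measure-separable} and in Fremlin's theorem, and what remains is only the routine bookkeeping of passing from a generic ultrafilter to a name of the form $\dot{\varphi}$ over a homogeneous restriction of $\mathbb{M}_\kappa$, and the observation that being an $\mathbb{M}'$-name with $\mathbb{M}' \cong \mathbb{M}$ is exactly the assertion "added by a single random real."
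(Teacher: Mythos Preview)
Your proposal is correct and follows exactly the paper's approach: invoke Fremlin's theorem under $\mathsf{MA}_{\omega_1}$ to conclude that $\mathbb{A}$ is measure separable, and then apply Theorem \ref{measure-separable}. The paper treats this as an immediate corollary without further argument; the additional bookkeeping you supply (reducing to $p=1$ via homogeneity, passing to the name $\dot{\varphi}$, and noting that $G\cap \mathbb{M}'$ is $\mathbb{M}'$-generic) is correct but not spelled out in the paper.
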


It is well known that consistently there are Boolean algebras without an uncountable independent family supporting non-separable measures. Thus, even a small Boolean algebra may have $\mathbb{M}_{\omega_1}$-names for ultrafilters which cannot be add
by a single random real. In fact, under $\mathsf{CH}$, there is a Boolean algebra with a much stronger property. The following theorem was proved in \cite{Kunen}.

\begin{thm}[Kunen]\label{Lspace} Assume $\mathsf{CH}$. There is a Boolean algebra $\mathbb{A}$ (with the Stone space $K$) and a measure $\mu$ on $\mathbb{A}$ such that 
	\begin{itemize}
		\item $\mathrm{Bor}(K)_{/ \hat{\mu}=0}$ is metrically isomorphic to $\mathbb{M}_{\omega_1}$ (in particular, $\hat{\mu}_{|B}$ is non-separable for every $\hat{\mu}$-positive $B\subseteq K$). 
		\item $F\subseteq K$ is nowhere dense if and only if $\hat{\mu}(F)=0$,
		\item $\mathbb{A}$ does not contain an uncountable independent family.
	\end{itemize}
\end{thm}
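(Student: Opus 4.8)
The plan is to construct $\mathbb{A}$, under $\mathsf{CH}$, as a $d_\mu$-dense subalgebra of $\mathbb{M}_{\omega_1}$, presented as a continuous increasing union $\mathbb{A}=\bigcup_{\alpha<\omega_1}\mathbb{A}_\alpha$ of countable subalgebras built by a transfinite recursion; one then takes $\mu=\lambda_{\omega_1}|_{\mathbb{A}}$ (automatically strictly positive, and atomless because a dense subalgebra of an atomless algebra is atomless), $K=\mathrm{St}(\mathbb{A})$ and $\hat\mu$ its Radon extension. Under $\mathsf{CH}$ one has $|\mathbb{M}_{\omega_1}|=\omega_1$ ($\mathbb{M}_{\omega_1}$ is a complete metric space of density $\omega_1$), hence $|[\mathbb{M}_{\omega_1}]^{\le\omega}|=|\mathbb{M}_{\omega_1}^{\omega}|=\omega_1$, so a single bookkeeping function of length $\omega_1$ can anticipate, at each stage $\alpha$: (i) an element $a_\alpha$ of a fixed $d_\mu$-dense subset of $\mathbb{M}_{\omega_1}$; (ii) a countable set $S_\alpha\subseteq\mathbb{A}_\alpha$; and (iii) a countable subalgebra $\mathbb{B}_\alpha\subseteq\mathbb{A}_\alpha$ together with an $\omega$-sequence from $\mathbb{A}_\alpha$ -- each object occurring cofinally often.

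For item (i) it is enough to require $a_\alpha\in\mathbb{A}_{\alpha+1}$: then $(\mathbb{A},d_\mu)$ is metrically dense in the complete space $\mathbb{M}_{\omega_1}$, while by the first theorem of Section~\ref{preliminaries} the algebra $\mathbb{A}=\mathrm{Clop}(K)$ is dense, under the same metric, in the complete space $\mathrm{Bor}(K)_{/\hat\mu=0}$; since the Boolean operations are $d_\mu$-uniformly continuous, both are the metric and Boolean completion of $(\mathbb{A},d_\mu)$, so $\mathrm{Bor}(K)_{/\hat\mu=0}$ is metrically isomorphic to $\mathbb{M}_{\omega_1}$, and the parenthetical follows because $\mathbb{M}_{\omega_1}$ is Maharam homogeneous of type $\omega_1$. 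For item (ii), at stage $\alpha$ one puts the complement of $\bigvee_{\mathbb{M}_{\omega_1}}S_\alpha$ into $\mathbb{A}_{\alpha+1}$ whenever $S_\alpha$ is predense in $\mathbb{A}_\alpha$ and $\mu\bigl(\bigvee_{\mathbb{M}_{\omega_1}}S_\alpha\bigr)<1$. Now if $S\subseteq\mathbb{A}$ is predense in the final $\mathbb{A}$ it is predense in every $\mathbb{A}_\alpha$ containing it, in particular at the stage where it is processed, whence $\mu\bigl(\bigvee_{\mathbb{M}_{\omega_1}}S\bigr)=1$ -- otherwise the nonzero complement of $\bigvee S$ would have been added to $\mathbb{A}_{\alpha+1}$ and would have destroyed predensity. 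Consequently, given a closed nowhere dense $F\subseteq K$, use the countable chain condition to pick a countable $S_0\subseteq\mathbb{A}$ with $\bigcup_{a\in S_0}[a]$ dense in the open dense set $K\setminus F$; then $S_0$ is predense, so $\hat\mu(F)=1-\hat\mu(K\setminus F)\le 1-\hat\mu\bigl(\bigcup_{a\in S_0}[a]\bigr)=1-\mu\bigl(\bigvee_{\mathbb{M}_{\omega_1}}S_0\bigr)=0$. The converse is immediate: a closed $\hat\mu$-null set cannot contain a nonempty clopen (which would be a nonzero element of $\mathbb{M}_{\omega_1}$ of measure $0$) and so is nowhere dense.

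Item (iii) is the crux and is the substantive part of Kunen's construction \cite{Kunen}. An uncountable independent family in $\mathbb{A}$ gives a Boolean embedding of the free algebra on $\omega_1$ generators into $\mathbb{A}$, hence a continuous surjection $K\twoheadrightarrow 2^{\omega_1}$; since a continuous image of a hereditarily Lindel\"of space is hereditarily Lindel\"of while $2^{\omega_1}$ is not, it suffices to arrange that $K$ be hereditarily Lindel\"of -- equivalently, for a Stone space, that every ideal of $\mathbb{A}$ be countably generated. The difficulty is that such an independent family, or a non-countably-generated ideal, is not a countable object, so it cannot be destroyed directly by the bookkeeping. One gets around this by a reflection argument: if $\{e_i:i<\omega_1\}\subseteq\mathbb{A}$ were independent, then for club-many $\delta$ one has $\{e_i:i<\delta\}\subseteq\mathbb{A}_\delta$, the tail $\{e_i:\delta\le i<\omega_1\}$ is an uncountable family independent over the countable subalgebra generated by $\{e_i:i<\delta\}$, and after thinning the family this is already witnessed by the countable data $(\mathbb{B}_\delta,\langle e_{\delta+n}\rangle_{n<\omega})$ that task (iii) produces. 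The recursion must therefore, whenever task (iii) exhibits such a threatening configuration over $\mathbb{B}_\alpha$, add to $\mathbb{A}_{\alpha+1}$ a Boolean combination that makes it impossible to continue that configuration $\omega_1$ times (equivalently, that ``caps'' the associated ideal). The main obstacle -- where the real work of \cite{Kunen} lies -- is to show that this can always be done inside $\mathbb{M}_{\omega_1}$ \emph{simultaneously} with items (i) and (ii): the elements added to correlate the generators and kill independence must not conflict with the generic elements added for density or the complements added for item (ii), so that the recursion never stalls. Granting this, $\mathbb{A}=\bigcup_{\alpha<\omega_1}\mathbb{A}_\alpha$ has the three required properties.
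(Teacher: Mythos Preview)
The paper does not prove this theorem; it simply quotes it from \cite{Kunen} (``The following theorem was proved in \cite{Kunen}'') and uses it as a black box to derive Corollary~\ref{pokunen}. So there is no proof in the paper to compare against.

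Your outline follows the standard shape of Kunen's construction: build a countable-by-countable subalgebra of $\mathbb{M}_{\omega_1}$ that is $d_{\lambda_{\omega_1}}$-dense (giving the first item), arrange that predense countable families have supremum of full measure (giving the second item), and along the way kill potential uncountable independent families by making $K$ hereditarily Lindel\"of. The reductions you sketch for items (i) and (ii) are correct, and your observation that an uncountable independent family would yield a continuous surjection onto $2^{\omega_1}$, contradicting hereditary Lindel\"ofness, is the right way to frame item (iii).

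That said, as you yourself acknowledge, the substantive content of the theorem is precisely the compatibility step you wave at in the last paragraph: one must show that at each stage the element added to cap the offending ideal can be chosen in $\mathbb{M}_{\omega_1}$ without obstructing the other tasks, and that the reflection argument actually catches every would-be uncountable independent family. Your text does not carry this out; it ends with ``Granting this'' and a pointer to \cite{Kunen}. So what you have written is an accurate road map rather than a proof, and in effect you are citing Kunen just as the paper does. One small point: in your treatment of the converse in item (ii), note that the statement should be read for closed $F$ (equivalently, as the coincidence of the meager and null $\sigma$-ideals); for arbitrary $F$ with $\hat\mu(F)=0$ your argument only gives that $F$ has empty interior, not that $\overline{F}$ does.
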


\begin{cor}\label{pokunen} Assume $\mathsf{CH}$. There is a Boolean algebra $\mathbb{A}$ which supports a measure and such that each separable measure is supported on a nowhere dense subset of its Stone space.
\end{cor}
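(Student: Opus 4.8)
The plan is not to construct anything new but to verify that the Boolean algebra $\mathbb{A}$ (with Stone space $K$ and measure $\mu$) delivered by Kunen's Theorem~\ref{Lspace} already has the required property. First I would check that $\mathbb{A}$ supports $\mu$: if $A\in\mathbb{A}^+$ then the clopen set $\mathrm{St}(A)\subseteq K$ is nonempty, hence has nonempty interior and is not nowhere dense, so by the second clause of Theorem~\ref{Lspace} we get $\hat\mu(A)=\hat\mu(\mathrm{St}(A))>0$. Thus the real content is the assertion that every separable measure $\nu$ on $\mathbb{A}$ is concentrated on a nowhere dense subset of $K$.

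So fix a separable measure $\nu$ on $\mathbb{A}$ (wlog a probability) and pass to its Radon extension $\hat\nu$ on $K$; since clopens approximate Borel sets in $\hat\nu$-measure (Preliminaries), $\hat\nu$ is still separable. Take the Lebesgue decomposition $\hat\nu=\hat\nu_{\mathrm a}+\hat\nu_{\mathrm s}$ with $\hat\nu_{\mathrm a}\ll\hat\mu$ and $\hat\nu_{\mathrm s}\perp\hat\mu$. The key step is to show $\hat\nu_{\mathrm a}=0$. Suppose not. Since $d_{\hat\nu_{\mathrm a}}\le d_{\hat\nu}$, the measure $\hat\nu_{\mathrm a}$ is again separable; yet by Radon--Nikodym it has a density $f=d\hat\nu_{\mathrm a}/d\hat\mu\ge 0$ in $L^1(\hat\mu)$ with $\hat\mu(\{f>0\})>0$, and for a suitable $M$ the Borel set $C=\{1/M\le f\le M\}$ has $\hat\mu(C)>0$. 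On $C$ the measures $\hat\mu|_C$ and $\hat\nu_{\mathrm a}|_C$ are mutually absolutely continuous with densities bounded away from $0$ and $\infty$, hence induce bi-Lipschitz-equivalent Frechet--Nikodym pseudometrics on $\mathrm{Bor}(C)$; but $\hat\mu|_C$ is nonseparable (because $C$ is $\hat\mu$-positive, by Theorem~\ref{Lspace}), so $\hat\nu_{\mathrm a}|_C$, and therefore $\hat\nu_{\mathrm a}$, is nonseparable --- a contradiction. Hence $\hat\nu=\hat\nu_{\mathrm s}\perp\hat\mu$, so there is a Borel $B\subseteq K$ with $\hat\mu(B)=0$ and $\hat\nu(K\setminus B)=0$. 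By Theorem~\ref{Lspace} the set $B$ is nowhere dense, so $\overline B$ is closed nowhere dense and $\mathrm{supp}(\hat\nu)\subseteq\overline B$; thus $\hat\nu$ is supported on a nowhere dense subset of $K$.

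The one point that needs care --- and which I expect to be the main obstacle --- is the vanishing of $\hat\nu_{\mathrm a}$: it genuinely uses that $\mu$ is Maharam-homogeneous of the uncountable type $\omega_1$ (so that $\hat\mu$ has no nonzero separable absolutely continuous ``part''), not merely that $\mu$ is nonseparable globally, and the restriction to the bounded-density set $C$ is exactly the device that turns the soft statement $\hat\nu_{\mathrm a}\ll\hat\mu$ into an honest metric comparison to which Theorem~\ref{Lspace} applies. The remaining ingredients --- strict positivity of $\mu$, separability of $\hat\nu$, and reading a nowhere dense carrier off $\hat\nu\perp\hat\mu$ via the topological clause of Theorem~\ref{Lspace} --- are routine; one could even skip mentioning $\mathrm{supp}(\hat\nu)$ and just record that $\hat\nu$ is concentrated on the nowhere dense Borel set $B$.
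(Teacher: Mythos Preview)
Your proposal is correct and follows essentially the same route as the paper: take the Kunen algebra from Theorem~\ref{Lspace}, Lebesgue-decompose $\hat\nu$ with respect to $\hat\mu$, kill the absolutely continuous part via Radon--Nikodym and the hereditary nonseparability of $\hat\mu$, and read off a nowhere dense carrier from the singular part using the null--nowhere-dense equivalence. The only differences are cosmetic: you explicitly verify strict positivity of $\mu$ (the paper leaves this implicit in the statement of Theorem~\ref{Lspace}), and you use a two-sided bound $1/M\le f\le M$ to get a bi-Lipschitz comparison of metrics, whereas the paper uses only the one-sided bound $f>\varepsilon$, which already suffices since $d_{\hat\nu_1}\ge\varepsilon\, d_{\hat\mu}$ on $B$ transfers nonseparability in the needed direction.
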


\begin{proof} 
	Let $\mathbb{A}$, $K$, $\mu$ be as in  Theorem \ref{Lspace}. Let $\nu$ be a separable measure carried by $\mathbb{A}$. 
	
	We will work in $K$. First, we will show that $\hat{\mu} \perp \hat{\nu}$. By Jordan decomposition $\hat{\nu} = \hat{\nu_0} + \hat{\nu_1}$, where $\hat{\nu_0} \perp \hat{\mu}$ and $\hat{\nu_1}\ll \hat{\mu}$. Since $\nu$ is separable, both
	$\hat{\nu}_0$ and $\hat{\nu}_1$ are separable, too.  By Radon-Nikodym theorem $\hat{\nu}_1 = \int f \ d\hat{\mu}$ for a measurable function $f\colon K\to \mathbb{R}$. Suppose for the contradiction that $\hat{\nu}_1(K)>0$. Then the function $f$ is not almost everywhere $0$ and so there is a $\hat{\mu}$-positive $B\subseteq K$ and
	$\varepsilon>0$ such that $f(x)>\varepsilon$ for each $x\in B$. But then $\hat{\nu}_1(A)>\varepsilon \hat{\mu}(A)$
	for each $A\subseteq B$ and so $\hat{\nu_1}_{| B}$ is not separable. Thus, $\hat{\nu}_1$ is not separable, a contradiction. So $\hat{\nu}_1 = 0$ and so $\hat{\mu} \perp \hat{\nu}$.

	So, there is $F\subseteq K$ such that $\hat{\mu}(F)=0$ and $\hat{\nu}(F)=1$. But then $F$ is nowhere dense and $\hat{\nu}$ is supported by $\overline{F}$ which is still nowhere dense.
\end{proof}

So, forcing with $\mathbb{M}$ adds ultrafilters on $\mathbb{A}$ which can be quite precisely 'localized': for each $\mathbb{M}$-name $\dot{\mathcal{U}}$ for an ultrafilter on $\mathbb{A}$ we can find a nowhere dense set $D\subseteq K$ in the ground model such that
$\Vdash_\mathbb{M} \dot{\mathcal{U}} \in D$. 

\section{Non-trivial convergent sequences of homomorphisms}\label{nontri}

In this section we will deal with homomorphisms into measure algebras in isolation with its forcing interpretations. First, we observe that we may use the fact that measure algebras are metrizable in a natural way to define notions of convergence of
homomorphisms.

\begin{df} Let $\mathbb{A}$ be a Boolean algebra and let $(\varphi_n)$ be a sequence of homomorphisms $\varphi_n\colon \mathbb{A} \to \mathbb{M}_\kappa$, where $\kappa$ is a cardinal number. We say that
	\begin{itemize}
		\item $(\varphi_n)$ converges to $\varphi$ \emph{pointwise} if $d_{\lambda_\kappa}(\varphi_n(A), \varphi(A))$ converges to $0$ for each $A\in \mathbb{A}$.
		\item $(\varphi_n)$ converges to $\varphi$ \emph{uniformly} if for each $\varepsilon>0$ there is $N$ such that for each $n>N$ and each $A\in \mathbb{A}$ we have $d_{\lambda_\kappa}(\varphi_n(A),\varphi(A)) <\varepsilon.$
	\end{itemize}
\end{df}

The following simple fact implies in particular that the Stone topology is a pointwise convergence topology.

\begin{prop}\label{nontrivial}
\label{uniform-conv}
Let $(\varphi_n)$ be a sequence of homomorphisms $\varphi_n\colon \mathbb{A} \to \{0,1\}$. For each $n$ define an ultrafilter $U_n$ on $\mathbb{A}$ by $U_n = \varphi^{-1}_n[\{1\}]$. Then
\begin{itemize}
	\item $(\varphi_n)$ converges pointwise if and only if $(U_n)$ converges (in the Stone topology),
	\item $(\varphi_n)$ converges uniformly if and only if $(U_n)$ converges trivially.
\end{itemize}
\end{prop}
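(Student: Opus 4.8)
The plan is to unwind the definitions on both sides. The maps $\varphi_n$ land in $\{0,1\} = \mathbb{M}_1$, and the Fréchet–Nikodym distance $d_{\lambda_1}$ is just the discrete metric: $d_{\lambda_1}(a,b) = 0$ if $a=b$ and $=1$ otherwise (since $\lambda_1$ assigns measure $1$ to the nonzero element). So $d_{\lambda_1}(\varphi_n(A),\varphi(A)) \to 0$ simply says that $\varphi_n(A) = \varphi(A)$ for all large $n$, i.e. $A \in U_n$ eventually iff $A \in U$ (where $U = \varphi^{-1}[\{1\}]$), and uniform convergence says there is a single $N$ working for every $A$ simultaneously.

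For the first bullet, I would recall that a basic clopen neighborhood of an ultrafilter $U$ in $\mathrm{St}(\mathbb{A})$ is of the form $[A] = \{W : A \in W\}$ for $A \in \mathbb{A}$, and that $(U_n) \to U$ in the Stone topology means: for every $A$, eventually $U_n \in [A]$ iff $U \in [A]$ — equivalently, for every $A$ there is $N$ with $A \in U_n \iff A \in U$ for all $n > N$. This is precisely the restatement of pointwise convergence of $(\varphi_n)$ obtained above, so the two conditions are literally the same statement once the discrete metric is identified. (One should note that pointwise convergence of $(\varphi_n)$ to some $\varphi$ forces $\varphi$ to be $\{0,1\}$-valued as well, since each $\varphi_n(A)$ is, so $\varphi = \varphi_U$ for the limit ultrafilter — there is no loss in phrasing it via ultrafilters.)

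For the second bullet I first need to recall what "converges trivially" means for a sequence of ultrafilters: $(U_n)$ converges trivially if it is eventually constant. Uniform convergence of $(\varphi_n)$ to $\varphi$ says: there is $N$ such that for all $n > N$ and all $A$, $d_{\lambda_1}(\varphi_n(A),\varphi(A)) < 1$, which in the discrete metric forces $\varphi_n(A) = \varphi(A)$ for all $A$ and all $n > N$, i.e. $U_n = U$ for all $n > N$ — exactly trivial convergence. Conversely, if $(U_n)$ is eventually equal to $U$, then for $n > N$ the distance is identically $0 < \varepsilon$, giving uniform convergence. I would write this out in the two directions.

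The only mild subtlety — which I would flag as the "main point" rather than an obstacle — is making explicit that on $\{0,1\} = \mathbb{M}_1$ the measure-algebra metric $d_{\lambda_1}$ is the discrete $\{0,1\}$-valued metric, so that "distance tends to $0$" collapses to "eventually equal" and "distance uniformly $< \varepsilon$" collapses (taking $\varepsilon \le 1$) to "equal for all $A$ at once." Once that observation is in place both equivalences are immediate from matching up the quantifiers, so the proof is short and entirely bookkeeping; no clever construction is needed.
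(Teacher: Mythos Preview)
Your argument is correct, and the paper in fact states this proposition without proof, treating it as an immediate observation. Your approach---identifying the Fr\'echet--Nikodym metric on $\{0,1\}$ as the discrete $\{0,1\}$-valued metric and then matching quantifiers with the Stone-topology and eventually-constant characterizations---is exactly the natural way to verify it and is clearly what the authors have in mind.
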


To some extend this proposition can be adapted to a more general situation of sequence of homomorphisms to measure algebras and convergence in the random models (see \cite{PbnSobota}).

Motivated by Proposition \ref{nontrivial} we say that a sequence of homomorphisms into a measure algebra \emph{converges non-trivially}  if it converges pointwise but not uniformly. We will show that for every infinite Boolean algebra $\mathbb{A}$ there are
nontrivial convergent sequences into measure algebras. In the proof we will treat separately the case when $\mathbb{A}$ contains the Cantor algebra and when it is superatomic.




\begin{prop}
\label{point-not-uni-withC}
Let  \(\mathbb{A}\) be a Boolean algebra containing the Cantor subalgebra. Then there is sequence $(\varphi_{n})$ of homomorphisms $\varphi_n\colon \mathbb{A} \to \mathbb{M}$ convergent pointwise but not uniformly.
\end{prop}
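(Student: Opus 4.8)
The plan is to obtain all the $\varphi_n$ from a single homomorphism $\varphi\colon\mathbb{A}\to\mathbb{M}$ by post-composing with automorphisms of $\mathbb{M}$; working with automorphisms of the whole of $\mathbb{M}$, rather than perturbing a map defined only on the Cantor subalgebra, is exactly what keeps the sequence pointwise convergent on all of $\mathbb{A}$ and not merely on $\mathbb{C}$. Identify $\mathbb{M}$ with $\mathrm{Bor}(2^\omega)/_{\lambda = 0}$ (writing $[B]$ for the class in $\mathbb{M}$ of a Borel set $B\subseteq 2^\omega$) and set $D_n=\{y\in 2^\omega\colon y(n)=0\}$. Fix an independent family $\{C_n\colon n\in\omega\}\subseteq\mathbb{C}$ generating the Cantor subalgebra $\mathbb{C}\subseteq\mathbb{A}$. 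Since $\mathbb{C}$ is the free Boolean algebra on $\{C_n\}$, there is a homomorphism $\psi\colon\mathbb{C}\to\mathbb{M}$ with $\psi(C_n)=[D_n]$ for all $n$; and since $\mathbb{M}$ is a complete Boolean algebra, $\psi$ extends to a homomorphism $\varphi\colon\mathbb{A}\to\mathbb{M}$ (Sikorski's extension theorem, cf.\ \cite[Proposition 5.6]{BAhandbook}).

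For each $n$ let $\sigma_n\colon 2^\omega\to 2^\omega$ be the coordinate flip given by $\sigma_n(y)(n)=1-y(n)$ and $\sigma_n(y)(k)=y(k)$ for $k\ne n$; this is a $\lambda$-preserving homeomorphism of $2^\omega$, so it induces a metric automorphism of $\mathbb{M}$, which I denote again by $\sigma_n$. Put $\varphi_n=\sigma_n\circ\varphi$; each $\varphi_n$ is then a homomorphism $\mathbb{A}\to\mathbb{M}$, with $\varphi_n(C_n)=\sigma_n([D_n])=[D_n^c]$ while $\varphi_n(C_k)=[D_k]=\varphi(C_k)$ for $k\ne n$.

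To see that $(\varphi_n)$ converges to $\varphi$ pointwise, fix $A\in\mathbb{A}$ and $\varepsilon>0$ and write $B=\varphi(A)$. By regularity of $\lambda$ there is a clopen $B'\subseteq 2^\omega$ with $\lambda(B\triangle B')<\varepsilon$; since $B'$ depends on only finitely many coordinates, $\sigma_n(B')=B'$ for all sufficiently large $n$, and for such $n$
\[ d_\lambda(\varphi_n(A),\varphi(A))=\lambda(\sigma_n(B)\triangle B)\le\lambda(\sigma_n(B)\triangle\sigma_n(B'))+\lambda(\sigma_n(B')\triangle B')+\lambda(B'\triangle B)=2\lambda(B\triangle B')<2\varepsilon . \]
Hence $\varphi_n(A)\to\varphi(A)$ for every $A\in\mathbb{A}$. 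On the other hand, $d_\lambda(\varphi_n(C_n),\varphi(C_n))=\lambda(D_n^c\triangle D_n)=1$ for every $n$, so the convergence to $\varphi$ is not uniform; since pointwise limits in the metric $d_\lambda$ are unique, $(\varphi_n)$ converges pointwise but not uniformly, as required.

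I do not expect a serious obstacle once the idea of routing everything through automorphisms of $\mathbb{M}$ is in place; the only delicate point is the one stressed above, namely that a construction which perturbs $\psi$ on $\mathbb{C}$ and then invokes Sikorski's theorem separately for each perturbation need not yield a sequence converging pointwise off $\mathbb{C}$, whereas composing one fixed extension $\varphi$ with the $\sigma_n$ inherits both the pointwise and the non-uniform behaviour from a single transparent computation inside $\mathbb{M}$.
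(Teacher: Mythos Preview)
Your proof is correct and follows essentially the same approach as the paper's: extend the natural embedding $\mathbb{C}\hookrightarrow\mathbb{M}$ to all of $\mathbb{A}$ via Sikorski, then post-compose with the coordinate-flip automorphisms $\sigma_n$ of $\mathbb{M}$ to obtain $\varphi_n=\sigma_n\circ\varphi$. Your write-up is in fact a bit cleaner than the paper's (which occasionally blurs $A$ with $\varphi(A)$); the only quibble is that the reference for Sikorski's extension theorem should be \cite[Theorem~5.9]{BAhandbook} rather than Proposition~5.6, which is the extension \emph{criterion}.
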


\begin{proof}
	We will treat the elements of the Cantor algebra $\mathbb{C}$ (embedded in $\mathbb{A}$) as clopen subsets of $2^\omega$ and the elements of $\mathbb{M}$ as Borel subsets of $2^\omega$. So, we define $\bar{\varphi}\colon \mathbb{C} \to \mathbb{M}$ by $\bar{\varphi}(C) =
	C$ understanding $C$ once as a clopen set, once as a representative of an element of $\mathbb{M}$. By Sikorski's Extension Theorem (\cite[Theorem 5.9]{BAhandbook}), since \(\mathbb{M}\) is complete, there isa homomorphism \(\varphi\) extending $\bar{\varphi}$. 

For $n\in \omega$ define  $f_{n}\colon 2^\omega \to 2^\omega$  by
\[f_{n}(a)=a +_{2} e_{n}\]
where \(+_{2}\) denotes addition modulo \(2\) and 
\[e_{n}(i)=
\begin{cases}
1 &  \text{for } i = n \\
0  & \text {for } i \neq n.
\end{cases}\]

Now for $n\in \omega$ define $\psi_n \colon \mathbb{M}\to \mathbb{M}$ by $\psi_{n}(B)=f_{n}[B]$ (again, identifying elements of $\mathbb{M}$ with its Borel representatives). Finally, let $\varphi_n = \psi_n \circ \varphi$.  We claim that the
sequence $(\varphi_{n})$ is the sought one.

First, denote $C_n = \{x\in 2^\omega\colon x(n)=1\}$ and notice that $\varphi_n(C_n) = C_n^c = \varphi(C_n)^c$. Thus, $d_\lambda(\varphi_{n+1}(C_n), \varphi_n(C_n))=1$ and so $(\varphi_n)$ does not converge uniformly.

To see that $(\varphi_n)$ converges pointwise, fix $\varepsilon>0$ and $A\in \mathbb{A}$. Let $C\in \mathbb{C}$ be such that $\lambda(C \triangle A)<\varepsilon/2$. Notice that $\varphi_n(C) = C$ for $n$'s big enough, as elements of $\mathbb{C}$ 
are finite Boolean combinations of $C_n$'s. Since $\varphi_n$ is a metric isomorphism, $\lambda(\varphi_n(A) \triangle \varphi_n(C))<\varepsilon/2$ and so $\lambda(\varphi_n(A) \triangle C)<\varepsilon/2$ for $n$'s big enough. Hence,
$\lambda(\varphi_n(A) \triangle A) < \varepsilon$ for $n$'s big enough and so $\varphi_n$ converges to identity.
\end{proof}

Now we will show that the conclusion of Proposition \ref{point-not-uni-withC} holds true also for Boolean algebras which does not contain the Cantor algebra. Those Boolean algebras are exactly the superatomic Boolean algebras mentioned in the previous
section. The Stone spaces of superatomic algebras are scattered spaces. It is well-known and widely used that scattered spaces contain nontrivial convergent sequences. However, we will enclose a proof of it as it is difficult to find it in the
literature (at least in the easily accessible one). The following proof is based on the proof contained in \cite{Sobota-Phd}. 


\begin{prop} 
If \(K\) is an infinite scattered compact space then \(K\) contains a nontrivial convergent sequence.
\end{prop}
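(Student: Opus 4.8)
The plan is to exploit the Cantor--Bendixson structure of a scattered compact space $K$ to extract a nontrivial convergent sequence. Recall that for a scattered space, the Cantor--Bendixson derivatives $K^{(0)} = K \supseteq K^{(1)} \supseteq \dots$ (where $K^{(\alpha+1)}$ is the set of non-isolated points of $K^{(\alpha)}$, and we take intersections at limits) eventually reach $\emptyset$; let $\xi$ be the least ordinal with $K^{(\xi)} = \emptyset$. Since $K$ is compact, $\xi$ is a successor, say $\xi = \eta + 1$, and $K^{(\eta)}$ is a nonempty finite set of points (it is compact and discrete in itself). Pick any $x \in K^{(\eta)}$.

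First I would observe that $x$ is not isolated in $K^{(\eta-1)}$ if $\eta$ is a successor (or, in the limit case, $x$ lies in the intersection of the $K^{(\alpha)}$ for $\alpha<\eta$ and is non-isolated in each). In either case $x$ is a non-isolated point of $K$ itself, so every neighbourhood of $x$ meets $K \setminus \{x\}$; since $K$ is infinite and (being scattered, hence with a base of clopen sets, as the Stone space of its clopen algebra) has a clopen base, I can build a strictly decreasing sequence of clopen neighbourhoods $U_0 \supseteq U_1 \supseteq \dots$ of $x$ with $\bigcap_n U_n = \{x\}$: at stage $n$, since $x$ is non-isolated there is a point $y_n \in U_{n-1} \setminus \{x\}$, and by clopen regularity there is a clopen $U_n$ with $x \in U_n \subseteq U_{n-1}$ and $y_n \notin U_n$; passing to a subsequence if needed ensures $\bigcap U_n = \{x\}$ — actually this last requires a little care, so instead I would fix a base $\{V_k\}$ of clopen neighbourhoods of $x$ and set $U_n = V_0 \cap \dots \cap V_n$, then thin it to be strictly decreasing (possible since $x$ is not isolated).

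Now for each $n$ choose a point $x_n \in U_n \setminus U_{n+1}$, which is nonempty because the $U_n$ are strictly decreasing. I claim $x_n \to x$: given any clopen (equivalently, any basic) neighbourhood $W$ of $x$, by construction of $\{U_n\}$ as a neighbourhood base there is $N$ with $U_N \subseteq W$, and then $x_n \in U_n \subseteq U_N \subseteq W$ for all $n \geq N$. So $(x_n)$ converges to $x$. Finally the sequence is nontrivial: the $x_n$ are pairwise distinct since $x_n \in U_n \setminus U_{n+1}$ while $x_m \in U_{m+1}$ for $m > n$, and none of them equals $x$ since $x \in U_{n+1}$ whereas $x_n \notin U_{n+1}$; thus $(x_n)$ is a one-to-one sequence converging to a point different from all its terms.

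The main obstacle — really the only subtle point — is producing the strictly decreasing clopen neighbourhood base $\{U_n\}$ with intersection $\{x\}$: one must use both that $x$ is non-isolated (to guarantee strict decrease at each step, so that the $U_n \setminus U_{n+1}$ are nonempty) and that $K$ is the Stone space of a Boolean algebra / is zero-dimensional Hausdorff compact (to get a countable clopen base at $x$ — here one uses that in a scattered space the relevant local structure is first-countable at points of finite CB-rank, or alternatively restricts attention to $x \in K^{(\eta)}$ and works in $K^{(\eta-1)}$ where $x$ is isolated so its trace gives the needed countable control). Once $\{U_n\}$ is in hand, extracting and verifying the convergent sequence is routine.
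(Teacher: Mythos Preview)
Your approach has a genuine gap at exactly the point you flag as subtle: there is in general no countable clopen neighbourhood base at your chosen $x$, so a decreasing sequence $\{U_n\}$ with $\bigcap_n U_n=\{x\}$ need not exist. Take $K$ to be the one-point compactification of an uncountable discrete space. This $K$ is compact Hausdorff and scattered, its Cantor--Bendixson height is $2$, and the unique point $x=\infty$ at the top level (so of CB rank $1$) is not first countable: any countable family of its neighbourhoods has intersection containing uncountably many points besides $\infty$. Neither of your proposed fixes survives this example: it shows that points of finite CB rank need not be first countable, and when $\eta$ is a limit there is no $K^{(\eta-1)}$ to retreat to (even when $\eta$ is a successor, $x$ being isolated in $K^{(\eta-1)}$ yields a single clopen set, not a countable base in $K$).

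The paper's proof avoids first countability entirely. It first produces a \emph{countable} discrete set $Y\subseteq K$ by repeatedly removing isolated points, then chooses $x$ to be an isolated point of the nonempty set $\operatorname{bd}Y$. Separating $x$ from $\operatorname{bd}Y\setminus\{x\}$ by an open $U$ gives $\overline{U\cap Y}=(U\cap Y)\cup\{x\}$; any enumeration of the countable set $U\cap Y$ then converges to $x$, since for each open $W\ni x$ the set $(U\cap Y)\setminus W$ is closed and discrete, hence finite. The point is that countability is built in from the start via $Y$, rather than extracted from the local structure at $x$.
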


\begin{proof} Let \(K\) be an infinite scattered compact space.  Let \(F_{0}=K\) and let $x_0$ be an isolated point of $F_0$. Suppose that for $n\in \omega$ we have constructed $F_n$ and $x_n\in F_n$ and define $F_{n+1} = F_n \setminus \{x_n\}$ and $x_{n+1}$ as an isolated point of $F_{n+1}$. 
Define 
\[Y= \{x_n\colon n\in \omega\}.\]

Then $Y$ is an infinite discrete subset of $K$ and so it is not closed and  \(\text{bd}Y \neq
\emptyset\). Hence, there is an isolated point \(x \in \text{bd}Y\). We claim that \(x\) is a limit of a sequence from \(Y\). 

Indeed, \(\{x\}\) and \(\text{bd}Y \setminus \{x\}\) are closed and thus there are open, disjoint  \(U, V \subseteq K\) such that \(\{x\} \subseteq U\) and \(\text{bd}Y \setminus \{x\} \subseteq V\). Notice that since $Y$ is discrete \(\overline{U \cap Y}=(U \cap
Y) \cup \{x\}\). Enumerate $U\cap Y = \{y_n\colon n\in \omega\}$. 

We claim that $(y_n)$ converges to $x$. Indeed, let $W$ be an open neighbourhood of $x$ and denote $Z = U\cap Y \setminus W$.  Then $\overline{Z} \cap \mathrm{bd} Y = \emptyset$ and so, since $Y$ is discrete and $Z\subseteq Y$,  $Z$ is closed. But
again, since $Z$ is discrete,
it means that $Z$ is finite and so almost all elements of $(y_n)$ belongs to $W$. As $W$ was arbitrary, $(y_n)$ converges to $x$.
 \end{proof}

\begin{prop}
\label{point-not-uni-noC}
If $\mathbb{A}$ is an infinite superatomic algebra, then there is a sequence $(\varphi_n)$ of homomoprhisms $\varphi_n\colon \mathbb{A} \to \mathbb{M}$ convergent pointwise but not uniformly. 
\end{prop}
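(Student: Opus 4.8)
The plan is to obtain the sequence purely from Stone duality together with Proposition~\ref{nontrivial}. Since $\mathbb{A}$ is infinite and superatomic, its Stone space $K=\mathrm{St}(\mathbb{A})$ is an infinite scattered compact space, so by the previous proposition $K$ contains a nontrivial convergent sequence; that is, there are ultrafilters $U_n$ on $\mathbb{A}$ ($n\in\omega$) and a point $U\in K$ with $U_n\to U$ in the Stone topology and $U_n\neq U$ for infinitely many (in fact all) $n$.

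Next I would turn each point of $K$ into a homomorphism into $\mathbb{M}$. Regard $\{0,1\}$ as the two-element subalgebra $\{0_\mathbb{M},1_\mathbb{M}\}$ of $\mathbb{M}$, and let $\varphi_n\colon \mathbb{A}\to\{0,1\}\subseteq\mathbb{M}$ be the Boolean homomorphism with $\varphi_n^{-1}[\{1\}]=U_n$, and similarly let $\varphi$ correspond to $U$. Because $d_{\lambda}$ restricted to $\{0_\mathbb{M},1_\mathbb{M}\}$ is the discrete metric, pointwise (respectively uniform) convergence of such homomorphisms in the sense of the Definition preceding Proposition~\ref{nontrivial} coincides with the notions used there.

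Finally I would invoke Proposition~\ref{nontrivial}: since $(U_n)$ converges in the Stone topology, $(\varphi_n)$ converges pointwise (to $\varphi$); and since $(U_n)$ does not converge trivially, $(\varphi_n)$ does not converge uniformly. Thus $(\varphi_n)$ is a sequence of homomorphisms $\mathbb{A}\to\mathbb{M}$ which converges pointwise but not uniformly. I do not expect a genuine obstacle here; the only points requiring a moment's care are checking that the sequence produced by the previous proposition is nontrivial in exactly the sense matching ``converges trivially'' in Proposition~\ref{nontrivial}, and that composing with the inclusion $\{0,1\}\hookrightarrow\mathbb{M}$ affects neither convergence notion.
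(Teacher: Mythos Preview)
Your proof is correct and follows essentially the same approach as the paper: use the previous proposition to get a nontrivial convergent sequence of ultrafilters in $\mathrm{St}(\mathbb{A})$, identify ultrafilters with homomorphisms into $\{0,1\}\subseteq\mathbb{M}$, and apply Proposition~\ref{nontrivial} to translate Stone convergence and nontriviality into pointwise and non-uniform convergence. The paper's proof is just a one-sentence summary of exactly this argument.
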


\begin{proof}
	It follows from the proposition above and Proposition \ref{uniform-conv} as ultrafilers on \(\mathbb{A}\) correspond to homomorphisms into the trivial algebra (which is of course a subalgebra of $\mathbb{M}$).
\end{proof}

\begin{thm}\label{nontrivii}
	For every infinite Boolean algebra \(\mathbb{A}\)  there is a sequence of homomorphisms $\varphi_n\colon \mathbb{A} \to \mathbb{M}$ which converges nontrivially. 
\end{thm}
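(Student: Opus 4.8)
The plan is to split into the two exhaustive cases already set up in the section. Every infinite Boolean algebra $\mathbb{A}$ either contains a copy of the Cantor algebra $\mathbb{C}$ or it does not; and by the discussion preceding Proposition \ref{point-not-uni-noC}, a Boolean algebra contains no copy of $\mathbb{C}$ precisely when it is superatomic. (This dichotomy is a standard fact: an infinite Boolean algebra that is not superatomic has a subalgebra with no atom, and a countable atomless Boolean algebra is isomorphic to $\mathbb{C}$.) So the proof is simply a case distinction invoking the two preceding propositions.

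\begin{proof}
	If $\mathbb{A}$ contains the Cantor subalgebra, apply Proposition \ref{point-not-uni-withC}. If $\mathbb{A}$ does not contain the Cantor subalgebra, then, since every infinite Boolean algebra which is not superatomic contains a countable atomless subalgebra (which is isomorphic to $\mathbb{C}$), $\mathbb{A}$ must be superatomic, and we apply Proposition \ref{point-not-uni-noC}. In either case we obtain a sequence $(\varphi_n)$ of homomorphisms $\varphi_n\colon \mathbb{A}\to\mathbb{M}$ which converges pointwise but not uniformly, i.e.\ converges nontrivially.
\end{proof}

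There is essentially no obstacle here: the theorem is a formal corollary of Propositions \ref{point-not-uni-withC} and \ref{point-not-uni-noC}, and the only point worth stating explicitly is the dichotomy that a Boolean algebra fails to embed $\mathbb{C}$ exactly when it is superatomic (equivalently, its Stone space is scattered), which was already flagged in the text. If one wanted to be fully self-contained one would spend a line recalling why a non-superatomic algebra contains an atomless subalgebra and why a countable atomless Boolean algebra is $\mathbb{C}$, but given the conventions of the paper it is reasonable to treat this as known.
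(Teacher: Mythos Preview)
Your proof is correct and matches the paper's own argument, which simply reads ``Apply Proposition \ref{point-not-uni-withC} and Proposition \ref{point-not-uni-noC}.'' You have just spelled out the dichotomy (non-superatomic implies containing a copy of $\mathbb{C}$) that the paper leaves implicit in the surrounding discussion.
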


\begin{proof}
Apply Proposition \ref{point-not-uni-withC} and Proposition \ref{point-not-uni-noC}.
\end{proof}

\bibliographystyle{alpha}
\bibliography{bib-homo}

\end{document}